\documentclass[12pt, reqno]{amsart} 
\usepackage{amsmath, amssymb, amsthm} 
\usepackage{geometry} 
\usepackage{enumitem} 
\usepackage[backend=biber, style=numeric, sorting=nyt, doi=true, url=true]{biblatex}
\usepackage{xurl}
\usepackage{hyperref}
\hypersetup{
    colorlinks=true,
    linkcolor=blue,
    citecolor=blue,
    urlcolor=blue
}

\usepackage{tikz}
\usetikzlibrary{arrows.meta}

\AtBeginBibliography{\sloppy}

\theoremstyle{plain} 
\newtheorem{theorem}{Theorem}[section]
\newtheorem{lemma}[theorem]{Lemma}
\newtheorem{proposition}[theorem]{Proposition}

\newtheorem{conjecture}[theorem]{Conjecture}

\theoremstyle{definition} 
\newtheorem{definition}[theorem]{Definition}

\theoremstyle{remark} 
\newtheorem{remark}[theorem]{Remark}

\newcommand{\F}{\mathcal{F}}
\newcommand{\Conf}{\operatorname{Conf}}
\newcommand{\rk}{\operatorname{rk}}

\begin{document}

\title{Classification of analytic conformal product structures}

\author{Xianfeng JIANG}
\address{Laboratoire mathématique d'Orsay, Université Paris-Saclay, 91405 Orsay, France}
\email{xianfeng.jiang@universite-paris-saclay.fr}


\subjclass[2020]{Primary 53C18; Secondary 53C35, 53C20}
\keywords{Local conformal product structures, Weyl connections}

\begin{abstract}
    We classify real-analytic conformal product structures on compact simply connected manifolds in all dimensions \(n\geq 2\). We prove that in dimension \(2\) and \(3\) no such structure exists, while in every dimension \(n\geq 4\) the two parallel distributions arise from a global product splitting.
\end{abstract}

\maketitle
\pagestyle{plain}

\section{Introduction}

The classical de Rham decomposition theorem states that a complete simply
connected Riemannian manifold with reducible holonomy splits globally as a
Riemannian product. In conformal geometry, one considers Weyl connections,
which preserve a conformal class rather than a fixed metric. A reducible Weyl
connection $D$ gives an orthogonal $D$-parallel splitting
$TM=T_1\oplus T_2$, called a \emph{conformal product structure}. Belgun and Moroianu \cite{BelgunMoroianu2011} studied these structures in connection with Weyl-parallel forms and reducible Weyl holonomy. Locally, a conformal product admits adapted product coordinates in which
\[
g=e^{2f_1}g_1+e^{2f_2}g_2;
\]
see also \cite[Proposition~2.3]{MoroianuPilca2025}. Unlike the Riemannian case, this local splitting need not extend to a global product.

Conformal products have also been studied under additional geometric assumptions. If $D$ is exact, it is the Levi-Civita connection of a metric in the conformal class. A closed, non-exact, non-flat reducible Weyl connection defines a \emph{locally conformally product} (LCP) structure. Matveev and Nikolayevsky \cite{MatveevNikolayevsky2015} constructed a compact example with such a connection. Kourganoff \cite{Kourganoff2019} proved a decomposition theorem for the universal covers of compact non-flat LCP manifolds, even though the associated Riemannian metric is incomplete. Flamencourt \cite{Flamencourt2024} studied adapted metrics and constructed further examples of LCP manifolds. These results concern closed Weyl connections, whereas here we do not assume that $D$ is closed.

There are also results for manifolds with special Riemannian metrics. Moroianu and Pilca classified conformal products on compact K\"ahler manifolds \cite{MoroianuPilca2025}. In particular, if $D$ is not the Levi-Civita connection of the given K\"ahler metric, then one of the two parallel distributions has rank one. For compact Einstein manifolds, they proved a warped-product result without assuming a global product splitting \cite{MoroianuPilcaEinstein2026}. For compact reducible Riemannian manifolds, they showed, under an additional assumption, that the metric is either conformally flat or locally a triple product \cite{MoroianuPilcaReducible2025}. The paper \cite{JiangConstantCurvature2026} proves non-existence results for compact manifolds of nonzero constant sectional curvature and certain compact irreducible locally symmetric spaces. Here we study global splitting without imposing curvature assumptions.

Compactness and simple connectedness do not suffice in the smooth case: in \cite{Jiang2026}, a non-global conformal product on $S^3$ is constructed. The main result of the present paper, Theorem~\ref{thm:analytic-main}, shows that real analyticity gives a de Rham-type result: {\em On a compact simply connected manifold of dimension at least four, every real-analytic conformal product structure admits a global product splitting compatible with $T_1$ and $T_2$.} In dimension three, no such analytic structure exists; the two-dimensional case is excluded by the topology of $S^2$. Note that this is a splitting of the manifold and the distributions, not necessarily a Riemannian product metric in the given conformal class.

The proof uses Tarquini's foliated Ferrand--Obata theorem \cite{Tarquini2004}, which shows that the induced analytic conformal foliation of codimension at least three is either Riemannian or transversely M\"obius. The Riemannian case is treated using the decomposition theorem of Ponge--Reckziegel \cite{PongeReckziegel1993}. The M\"obius case is handled by using a developing map along an integrable horizontal distribution. In the remaining four-dimensional $2+2$ case, a parallel Hermitian structure allows us to apply Beauville's theorem \cite{Beauville2000}; in dimension three, non-existence follows from Haefliger's theorem
\cite{Haefliger1962}.

Section~1 recalls the transverse structures and the relevant conjectures. Sections~2 and~3 treat the Riemannian and M\"obius cases, respectively, while Section~4 treats the low-dimensional cases.

\textbf{Acknowledgments.} The author would like to thank Andrei Moroianu for his guidance and support. The author is supported by a doctoral contract from Université Paris-Saclay, through the Programme blanc GS Mathématiques and the École Doctorale de Mathématiques Hadamard (EDMH).

\section{Preliminaries}

On a conformal manifold $(M,c)$, where $c$ is a conformal class of Riemannian metrics, a torsion-free linear connection $D$ compatible with the conformal structure $c$ is called a Weyl connection. Recall that by fixing a background metric $g$ in the conformal class $c$, there is a one-to-one correspondence between Weyl connections and 1-forms, i.e., for every Weyl connection $D$, there is a 1-form $\theta$ (called the Lee form), satisfying the formula $Dg=-2\theta \otimes g$, and conversely $D$ is uniquely determined by $\theta$.  

Following the terminology of \cite{Jiang2026}, a conformal product structure consists of a Weyl connection $D$ together with a $g$-orthogonal decomposition $TM=T_1\oplus T_2$ into two nonzero $D$-parallel distributions. Since $D$ is torsion-free, both $T_1$ and $T_2$ are integrable; we denote their induced foliations by $\F_1$ and $\F_2$, respectively.

We call the splitting a \emph{global product splitting} if there are manifolds
$M_1,M_2$ and a diffeomorphism $\Phi:M_1\times M_2\to M$
such that $d\Phi(TM_1)=T_1$ and $d\Phi(TM_2)=T_2$. This is a statement about the manifold and the two distributions;
it does not assert that $[g]$ contains a Riemannian product metric.

\begin{definition}\label{def:analytic-cps}
A conformal product structure $(M,[g],D,T_1,T_2)$ is called \emph{real-analytic} if $M$ is a real-analytic manifold, the conformal class $[g]$ admits local real-analytic representatives, $D$ is a real-analytic connection, and $T_1,T_2$ are real-analytic subbundles of $TM$. Here a distribution is called real analytic if it is locally spanned by real-analytic vector fields.
\end{definition}

After interchanging the two distributions if necessary, we may assume
$\dim T_2\geq\dim T_1$. 

\begin{definition}\label{def:conformal-foliation}
Let $\F$ be a foliation of codimension $q$ on $M$. Following \cite[Definition~1.1.1]{Tarquini2004}, $\F$ is called \emph{conformal} if there exist a
possibly disconnected $q$-dimensional Riemannian manifold $(N,h)$ together with a
foliation cover $\{U_i\}$, submersions $f_i:U_i\to N$ whose connected fibers are
the plaques of $\F$, and local diffeomorphisms $\gamma_{ij}$ between open subsets
of $N$ such that $f_i=\gamma_{ij}\circ f_j$ on overlaps and every $\gamma_{ij}$
is conformal with respect to $h$.
\end{definition}

\begin{definition}\label{def:Riemannian-foliation}
Following the standard terminology (see, for example, \cite{Molino1988}), a conformal foliation $\F$ represented by an $N$-cocycle $\{U_i,f_i,\gamma_{ij}\}$ is called \emph{Riemannian} if $N$ admits a Riemannian metric $h_N$ for which every transverse change of coordinates $\gamma_{ij}$ is a local isometry.
\end{definition}

A Riemannian metric $h_N$ on $N$ is called
\emph{holonomy-invariant} if
\[
\gamma_{ij}^*h_N=h_N
\]
for every transverse change of coordinates $\gamma_{ij}$. Equivalently, $h_N$ is holonomy-invariant if every element of the holonomy pseudogroup generated by the $\gamma_{ij}$ is a local isometry of $h_N$.

Our main result is the following:

\begin{theorem}[Analytic globalization]\label{thm:analytic-main} Let $(M,g)$ be compact, connected, and simply connected, with dimension $n\ge3$. Assume that $(M,[g],D,T_1,T_2)$ is a real-analytic conformal product structure. Then the splitting $TM=T_1\oplus T_2$ is a global product splitting.
\end{theorem}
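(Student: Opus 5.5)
The overall strategy is the one outlined in the introduction. First I will show that the foliation $\F_2$ (the one with larger leaves, $\dim T_2\ge\dim T_1$) is transversely conformal. Then I will split into cases according to its transverse geometry. Writing locally $g=e^{2f_1}g_1+e^{2f_2}g_2$ in adapted product coordinates, the plaques of $\F_2$ are the slices $\{x_1\}\times N_2$. Along such a plaque, the transverse metric $e^{2f_1}g_1$ changes only by the conformal factor $e^{2(f_1-f_1(x_1,\cdot))}$. Hence the local projections onto the $x_1$-factor form an $N$-cocycle, as in Definition~\ref{def:conformal-foliation}, with conformal transition maps. By the standing analyticity assumption this cocycle is real-analytic. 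The same reasoning applies to $\F_1$.

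\textbf{Codimension of $\F_2$ at least three.} Here $\rk T_1\ge3$, and I will apply Tarquini's foliated Ferrand--Obata theorem. It says that $\F_2$ is either Riemannian in the sense of Definition~\ref{def:Riemannian-foliation} or transversely M\"obius, i.e.\ modelled on $S^q$ with transition maps in $\Conf(S^q)$.

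In the Riemannian case there is a holonomy-invariant transverse metric $h$. I will realise $h$ as a bundle-like metric $\tilde g=h\oplus g_2'$ on $M$, where $g_2'$ is a suitable metric on $T_2$. The aim is to make $T_1$ and $T_2$ both totally geodesic, or $T_1$ totally geodesic and $T_2$ Riemannian, using $D$-parallelism to show that $T_1$ is spanned by $\tilde g$-basic fields. Compactness then makes $\tilde g$ complete, and simple connectedness together with the Ponge--Reckziegel decomposition theorem for complete manifolds with two complementary orthogonal foliations (one totally geodesic, the other with bundle-like metric) gives a global diffeomorphism $\Phi:M_1\times M_2\to M$ respecting the distributions.

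In the M\"obius case, $T_1$ is an integrable horizontal distribution complementary to $\F_2$. I will build a developing map $\mathrm{dev}:M\to S^q$, which is defined globally because $\pi_1(M)=1$, and show that it is constant on $\F_2$-leaves. Restricted to any leaf $L$ of $\F_1$, it is a local diffeomorphism $L\to S^q$. Using compactness of $M$ to get uniform size of foliated charts, I will argue that $\mathrm{dev}|_L$ is a covering, so that $L\cong S^q$ since $q\ge3$. Then $\mathrm{dev}$ is a fibre bundle with fibres the $\F_2$-leaves and with the $\F_1$-leaves as flat sections. Simple connectedness of the base trivialises the flat structure, so $M\cong S^q\times F$ globally.

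\textbf{Codimension of $\F_2$ at most two.} Since $n\ge3$ and $\dim T_2\ge\dim T_1$, this leaves:
\begin{enumerate}
\item $n=3$ with ranks $1+2$;
\item $n=4$ with ranks $1+3$ or $2+2$;
\item $\rk T_1\le2$ in higher dimension.
\end{enumerate}
When $\rk T_1\ge3$ we are in the previous case. Otherwise I will swap roles where possible: apply Tarquini to $\F_1$, whose codimension $\rk T_2$ is at least three whenever $n\ge5$ or $n=4$ with ranks $1+3$. This still yields the global product, since the arguments above are symmetric once the codimension is at least three.

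For $n=3$, the rank-one foliation is transversely conformal of codimension two and real-analytic. Haefliger's theorem forbids analytic codimension-one foliations on compact simply connected manifolds, and here it is applied to $\F_2$, which has codimension one. This shows the structure cannot exist, so the statement holds vacuously. For $n=4$ with ranks $2+2$, the $D$-parallel almost complex structure rotating each plane is integrable, giving a complex surface with two transverse holomorphic foliations. Beauville's classification of compact complex surfaces with such a pair, combined with simple connectedness, forces $M\cong S^2\times S^2$-type products with the foliations as factors.

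\textbf{Main obstacle.} I expect the hardest step to be the M\"obius case, specifically proving completeness of $\mathrm{dev}|_L$ along the $\F_1$-leaves, which is needed to get a covering, and proving that the $\F_2$-leaves are exactly the fibres. My first attempt will be to use compactness to obtain a uniform lower bound on the size of the product charts. With that bound, paths in $S^q$ lift along $L$ for definite steps, giving the path-lifting property.
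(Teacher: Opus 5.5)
Your overall architecture is the paper's: Tarquini's dichotomy for whichever foliation has codimension $\max\{\rk T_1,\rk T_2\}\ge 3$, a global developing map plus the integrable complement in the M\"obius case, the parallel Hermitian structure plus Beauville for $2+2$, and Haefliger for $n=3$. The M\"obius obstacle you anticipate is also easier than uniform chart sizes suggest. The developing map is a submersion from a compact manifold, hence surjective and proper. So horizontal lifts of a path $\gamma$ stay in the compact set $\mathrm{dev}^{-1}(\gamma([0,1]))$ and exist for all time, as in Lemma~\ref{lem:flat-connection}.

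\textbf{The gap is in the Riemannian case.} Theorem~\ref{thm:Ponge-Reckziegel} requires the second foliation to be \emph{totally umbilical}, not bundle-like. Moreover, take $\tilde g=h\oplus g_2'$ with $T_1\perp T_2$, both integrable. The Koszul formula gives $2\tilde g(\tilde\nabla_YZ,X)=-(\mathcal L_X\tilde g)(Y,Z)$ for $Y,Z\in T_1$ and $X\in T_2$. So bundle-likeness of $\F_2$ is \emph{equivalent} to $\F_1$ being totally geodesic. Your two hypotheses are therefore one, and umbilicity of $\F_2$ is never addressed. Your alternative aim, both foliations totally geodesic, would make $M$ locally a Riemannian product and is not available in general. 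Also, no $D$-parallelism is needed to obtain basic fields: any complement of $T\F_2$ is spanned by lifts of transverse fields.

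\textbf{The missing idea is to tie the leafwise metric to the conformal structure.} The same Koszul computation gives $2\tilde g(\tilde\nabla_UV,X)=-(\mathcal L_X\tilde g)(U,V)$ for $U,V\in T_2$ and $X\in T_1$. This depends only on $g_2'$, and umbilicity of $\F_2$ is unchanged under $g_2'\mapsto e^{2v}g_2'$, since the change adds $2X(v)\,e^{2v}g_2'$. Now choose $g_2'=g|_{T_2}$. Then $\F_2$ is totally umbilical for $\tilde g$ because it is so for $g$, as for any $D$-parallel splitting. Theorem~\ref{thm:Ponge-Reckziegel} then applies to the complete metric $\tilde g$, with $h$ arbitrary.

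This repair even bypasses the paper's Lemma~\ref{lem:invariant-conformal-representative}. The paper instead uses the ratio of volume densities to place the holonomy-invariant transverse metric inside the transverse conformal class. The rescaled metric then stays in $[g]$, and umbilicity of the other foliation is automatic. Alternatively, the statement you actually wrote down (complete, simply connected, one foliation totally geodesic with integrable orthogonal complement) calls for a different decomposition theorem, of Blumenthal--Hebda type via Ehresmann connections. In that case you must cite that theorem, not Ponge--Reckziegel.
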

\begin{remark}
This theorem also holds in case of $n=2$ trivially. Since $M$ is compact and simply connected, we have $M\simeq S^2$. If $TS^2=T_1\oplus T_2$, the line bundle $T_i$ must be orientable, thus there is a vector field generating $T_i$, which contradicts the Poincar\'e-Hopf Theorem.
\end{remark}

The main tool in the proof is the following theorem of Tarquini \cite[Theorem~0.0.1]{Tarquini2004}. We first introduce the required terminology.

\begin{definition}\label{def:transversely-analytic}
A conformal foliation $\mathcal{F}$ is called \emph{transversely real-analytic} if there are data $(N,h)$ and $\{U_i,f_i,\gamma_{ij}\}$ as in Definition~\ref{def:conformal-foliation} such that $h$ is a real-analytic Riemannian metric on $N$, and every transverse change of coordinates $\gamma_{ij}$ is real analytic.
\end{definition}

\begin{definition}\label{def:spherical-foliation}
Following the terminology in \cite[Example~1.1.3]{Tarquini2004}, a codimension-$q$ foliation is called a
$(\Conf(S^q),S^q)$-{\em foliation}, or {\em transversely M\"obius}, if it is represented by an $S^q$-cocycle
$\{U_i,f_i,\gamma_{ij}\}$ in which every $\gamma_{ij}$ is the restriction, to
an open subset of $S^q$, of an element of the conformal group $\Conf(S^q)$ of
the round sphere.
\end{definition}

\begin{theorem}[Tarquini]\label{thm:Tarquini}
Let $\F$ be a transversely real-analytic conformal foliation of codimension
$q\geq3$ on a compact, connected manifold. Then $\F$ is either Riemannian
or transversely M\"obius, equivalently, a $(\Conf(S^q),S^q)$-foliation.
\end{theorem}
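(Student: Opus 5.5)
The plan is to recast the statement in the language of Cartan geometries and transversely parallelizable foliations, and then run a foliated analogue of the Ferrand--Obata argument. Since $q\geq3$, the transverse conformal structure $(N,h)$ of Definition~\ref{def:transversely-analytic} determines a canonical normal Cartan connection of type $(\Conf(S^q),P)$, where $P\subset\Conf(S^q)$ is the stabilizer of a point. This gives a principal $P$-bundle $\pi:\widehat N\to N$ with an $\mathfrak{so}(q+1,1)$-valued Cartan connection $\omega$. Naturality means every conformal transition $\gamma_{ij}$ lifts uniquely to a local automorphism $\widehat\gamma_{ij}$ of $(\widehat N,\omega)$. Everything remains real-analytic because $h$ and the $\gamma_{ij}$ are.

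\textbf{Step 1: the lifted foliation.} Pull the construction back to $M$. I obtain a $P$-bundle $\widehat M\to M$ with a lifted foliation $\widehat\F$ of the same dimension as $\F$, and $\widehat\F$ is transversely parallelizable by $\omega$. Since $P$ is not compact, I reduce to a compact situation as follows. Fix a maximal compact subgroup $K\cong O(q)\subset P$, and note that $\widehat M/K$ is the bundle of transverse metrics in the conformal class.

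\textbf{Step 2: the dichotomy.} I aim to show that either the holonomy pseudogroup is equicontinuous, or it is not. In the equicontinuous case, the lifted leaves stay in a region of $\widehat M$ that is compact modulo $K$. I then apply Molino's structure theory for transversely parallelizable foliations on the relevant compact saturated set: the closures of leaves of $\widehat\F$ are the fibres of a locally trivial fibration, so the holonomy acts properly. Averaging over a compact group then produces a holonomy-invariant metric in the conformal class, which makes $\F$ Riemannian in the sense of Definition~\ref{def:Riemannian-foliation}.

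\textbf{Step 3: the non-equicontinuous case.} Here compactness of $M$ gives, by a diagonal argument over a finite foliation cover, a sequence of holonomy elements $\gamma_k$ defined near a fixed point $x\in N$ whose lifts have unbounded $P$-components. Following Frances' local proof of Ferrand--Obata, I can arrange, after composing with bounded elements, that the $P$-components diverge along the noncompact Cartan direction of $P$ (a dilation-type element $a_k\to\infty$). The Cartan curvature $\kappa$ is invariant under all $\widehat\gamma_k$ and is $P$-equivariant with homogeneity of positive weight under the dilations. Hence $\kappa(\widehat\gamma_k(y))=a_k\cdot\kappa(y)$ must tend to $0$ on an open set where the $\widehat\gamma_k$ converge, forcing $\kappa\equiv0$ on a nonempty open subset of $N$. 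The needed identity is that $\kappa$ is the Weyl tensor for $q\geq4$ and the Cotton tensor for $q=3$.

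\textbf{Step 4: analytic continuation.} Since $\kappa$ is analytic and holonomy-invariant, and the holonomy pseudogroup of a foliation on a connected manifold has connected orbit space, the vanishing propagates to all of $N$. A flat normal conformal Cartan connection gives local charts into $S^q$ with transitions in $\Conf(S^q)$. Composing the $f_i$ with these charts yields an $S^q$-cocycle, so $\F$ is transversely M\"obius in the sense of Definition~\ref{def:spherical-foliation}.

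\textbf{Main obstacle.} I expect Step~3 to be the hard part. One must extract, from mere failure of equicontinuity on a compact manifold, a sequence with genuinely contracting $P$-components on a fixed open set, since holonomy elements are only locally defined. My first attempt would use Molino's closure fibration on $\widehat M$, viewed over the noncompact directions of $P$. The idea is that a noncompact leaf closure of $\widehat\F$ meets a fixed fibre in an unbounded set, and this produces the divergent sequence with domains controlled by a uniform Lebesgue number of the cover.
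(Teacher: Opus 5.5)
The paper does not prove this statement. It is quoted from \cite[Theorem~0.0.1]{Tarquini2004} and used as a black box, so your outline has to stand on its own as a reconstruction of Tarquini's foliated Ferrand--Obata theorem. Its architecture is the natural one, and Steps~1 and~4 are fine. There is a genuine gap, however. Step~3 carries the whole content of the theorem and is asserted rather than proved, and the reduction you propose there is false.

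Write $P=(\mathbb R_{>0}\times O(q))\ltimes P_+$ with $P_+=\exp\mathfrak g_1$, and let $A\cong\mathbb R_{>0}$ be the dilation subgroup containing your $a_k$. The dilation character $\chi:P\to P/(O(q)\ltimes P_+)\cong\mathbb R_{>0}$ is a homomorphism that is trivial on $P_+$, sends bounded sets to relatively compact subsets of $\mathbb R_{>0}$, and restricts to an isomorphism on $A$. So a divergent sequence $p_k=\exp(\xi_k)$ with $\xi_k\in\mathfrak g_1$ can never be written $l_ka_kl_k'$ with $l_k,l_k'$ bounded in $P$ and $a_k\in A$. Composing with bounded elements does not move translation-type holonomy into the dilation direction.

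For such holonomy your curvature argument collapses. $P_+$ acts trivially on $\mathfrak g/\mathfrak p$ and on $\mathfrak g_1$, so $\exp(\xi)$ acts by $W+C\mapsto W+C+[\xi,W]$, where $W$ and $C$ are the Weyl and Cotton components of $\kappa$. Boundedness then yields only $W(X,Y)\xi^\sharp=0$ for a limit direction $\xi$ of $\xi_k/|\xi_k|$. That gives nothing when $q=3$, where $\kappa$ is pure Cotton, and it does not force $W=0$ when $q\ge5$.

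In the global Ferrand--Obata setting such sequences are handled by exploiting the automorphisms on a whole neighbourhood of the base point. Passing from pointwise information to $\kappa\equiv0$ on an open set needs the same thing. For holonomy this is exactly what is missing. Holonomy maps along long leafwise paths can have domains shrinking to a point. Failure of equicontinuity only produces arbitrarily close pairs of points pushed apart by some holonomy element, not a divergent sequence with a common domain.

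Your suggested repair, and already Step~2, rest on Molino's structure theory for $\widehat{\F}$. That theory needs a compact ambient manifold or a complete transverse parallelism, and neither is available here. $\widehat M$ is non-compact, and the $\omega$-constant fields in the $\mathfrak g/\mathfrak p$ directions are in general incomplete. This already happens for a compact hyperbolic manifold: its flat conformal Cartan geometry develops onto a round ball, whereas a complete flat Cartan geometry would develop as a covering of $S^q$. Restricting to a compact saturated set does not help, since the flows of the basic fields leave it.

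Note also that your outline uses analyticity only in Step~4. Steps~1--3 therefore claim a purely smooth dichotomy: Riemannian, or transversely conformally flat on a nonempty open set. They are unjustified at two points: the structure of leaf closures of $\widehat{\F}$ on a non-compact bundle with incomplete parallelism, and the existence of holonomy maps with a uniform domain. Those are exactly where an additional rigidity input, such as transverse analyticity, has to do real work.

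A minor slip: $\widehat M/K$ is not the bundle of transverse metrics. That bundle is $\widehat M/(O(q)\ltimes P_+)$, whereas $\widehat M/K$ has fibre $P/K\cong H^{q+1}$.
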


Without loss of generality, we assume that $\dim T_1\le \dim T_2$. In the case of $\operatorname{codim}\mathcal{F}_1=\dim T_2\geq3$, in order to apply Theorem~\ref{thm:Tarquini}, we first show in the next two lemmas that $\mathcal{F}_1$ is a transversely real-analytic conformal foliation.

\begin{lemma}\label{lem:conformal-foliation}
The foliation $\F_1$ is a conformal foliation of codimension $q$. Its transverse
conformal structure is induced by $[g|_{T_2}]$.
\end{lemma}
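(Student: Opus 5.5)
To prove the lemma, I would show that the normal bundle $TM/T_1\cong T_2$, equipped with the conformal class $[g|_{T_2}]$, is invariant under the Bott (holonomy) transport along the leaves of $\F_1$. Once this is established, the local quotients of $\F_1$ inherit well-defined conformal structures, and these glue together to give the cocycle of Definition~\ref{def:conformal-foliation}.

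\textbf{Step 1: invariance along the leaves.} Fix a background metric $g$ with Lee form $\theta$, so that $Dg=-2\theta\otimes g$. Take $X\in\Gamma(T_1)$ and $Y,Z\in\Gamma(T_2)$. Since $D$ is torsion-free and both distributions are $D$-parallel, we have
\[
[X,Y]=D_XY-D_YX\equiv D_XY \pmod{T_1}.
\]
It follows that
\[
(\mathcal L_Xg)(Y,Z)=X\,g(Y,Z)-g([X,Y],Z)-g(Y,[X,Z]).
\]
Because $T_1\perp T_2$, the $T_1$-components of the brackets drop out, and this equals
\[
X\,g(Y,Z)-g(D_XY,Z)-g(Y,D_XZ)=(D_Xg)(Y,Z)=-2\theta(X)\,g(Y,Z).
\]
Hence the restriction of $\mathcal L_Xg$ to $T_2\times T_2$ equals $-2\theta(X)\,g|_{T_2}$. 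In words, the flows of vector fields tangent to $\F_1$ preserve $g|_{T_2}$ up to a conformal factor, when it is regarded as a bilinear form on the normal bundle. This is the conformal analogue of the bundle-like metric condition.

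\textbf{Step 2: construction of the cocycle.} Choose a foliation atlas of distinguished charts $U_i\cong P_i\times V_i$, with $V_i\subset\mathbb R^q$, and let $f_i:U_i\to V_i$ be the projection, whose fibers are the plaques of $\F_1$. The pointwise conformal class $[g|_{T_2}]$ pushes forward under $df_i$, which is an isomorphism $T_2\to TV_i$. By Step 1, the resulting conformal class on $V_i$ is independent of the point chosen in a plaque. Indeed, any two points of a connected plaque are joined by a piecewise flow of fields tangent to $T_1$ that are lifts of the zero field on $V_i$, and such flows preserve $[g|_{T_2}]$ on the quotient.

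To turn this into a metric, pick any representative $h_i$ of this conformal class on $V_i$. Let $N=\bigsqcup_i V_i$ with $h=\bigsqcup_i h_i$. On an overlap $U_i\cap U_j$, the transition $\gamma_{ij}$ with $f_i=\gamma_{ij}\circ f_j$ intertwines the two pushed-forward classes, since both come from the same $[g|_{T_2}]$. Therefore $\gamma_{ij}$ is conformal with respect to $h$.

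\textbf{Main obstacle.} The only nontrivial point is the well-definedness in Step 2, namely that the conformal class descends to the local quotients. This is exactly where the orthogonality $T_1\perp T_2$ and the $D$-parallelism of both distributions are used in the Lie derivative computation of Step 1. The computation itself shows that $\mathcal L_X$ preserves the class $[g|_{T_2}]$, so the class is constant along plaques. Changing the background metric $g$ within the conformal class changes $\theta$ by an exact form but does not affect the class $[g|_{T_2}]$, so the construction is independent of all choices. This also confirms the final claim of the lemma, that the transverse conformal structure is induced by $[g|_{T_2}]$, and it shows that the codimension is $q=\dim T_2$.
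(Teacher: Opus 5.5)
Your proof is correct, but it takes a different route from the paper's. The paper does no computation. It invokes the local normal form of Moroianu--Pilca \cite[Proposition~2.3]{MoroianuPilca2025}: there are charts $U_i\simeq U_i^1\times U_i^2$ adapted to both foliations in which $g=e^{2f_{1,i}}g_{1,i}+e^{2f_{2,i}}g_{2,i}$. Hence $g|_{T_2}=e^{2f_{2,i}}\pi_i^*g_{2,i}$ is visibly conformal to a metric pulled back from the transverse factor, and the paper only has to note that the transition maps $\gamma_{ij}$ preserve $[g|_{T_2}]$.

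You instead prove the infinitesimal statement $(\mathcal L_Xg)|_{T_2\times T_2}=-2\theta(X)\,g|_{T_2}$ for $X\in\Gamma(T_1)$. This uses only torsion-freeness, the $D$-parallelism of $T_1$ and $T_2$, orthogonality, and $Dg=-2\theta\otimes g$. You then integrate it along connected plaques of an arbitrary distinguished chart of $\F_1$: along a path $c$ in a plaque, the Gram matrix of the $T_2$-lifts of vector fields on $V_i$ is multiplied by the single factor $e^{-2\int_c\theta}$.

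What each route buys:
\begin{itemize}
\item Your route is self-contained and needs only charts adapted to $\F_1$. It isolates exactly which hypotheses are used. In effect it reproves the $T_2$-part of the Moroianu--Pilca normal form, and it is the conformal analogue of the bundle-like condition in Lemma~\ref{lem:bundle-like}.
\item The paper's route is shorter given the citation, and it produces bi-adapted charts that it reuses in Lemma~\ref{lem:analytic-foliation}.
\end{itemize}

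One small detail is left implicit in your argument: the descended conformal class on $V_i$ must be smooth. This follows by computing it along a smooth section $y\mapsto(p_0,y)$ of $f_i$, exactly as the paper does with $s_i$ in Lemma~\ref{lem:analytic-foliation}.
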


\begin{proof}
By \cite[Proposition~2.3]{MoroianuPilca2025}, every point has an adapted product
neighborhood $U_i\simeq U_i^1\times U_i^2$ on which $T_1=TU_i^1$,
$T_2=TU_i^2$, and
\[
g=e^{2f_{1,i}}g_{1,i}+e^{2f_{2,i}}g_{2,i},
\]
where $g_{k,i}$ is a Riemannian metric on $U_i^k$. Let $\pi_i:U_i\to U_i^2$ be the second projection. On $T_2$ one has $g|_{T_2}=e^{2f_{2,i}}\pi_i^*g_{2,i}$. Hence $g_{2,i}$ represents the transverse conformal structure induced by $[g|_{T_2}]$.

On each connected component of the intersection $U_i\cap U_j$, the transverse projections satisfy
$\pi_i=\gamma_{ij}\circ\pi_j$ for a local diffeomorphism $\gamma_{ij}$ between
open subsets of the transverse factors. Since both $g_{2,i}$ and $g_{2,j}$
represent the same conformal class $[g|_{T_2}]$, we have
$\gamma_{ij}^*[g_{2,i}]=[g_{2,j}]$; thus $\gamma_{ij}$ is conformal. Taking
$N:=\coprod_i U_i^2$ and the metric whose restriction to $U_i^2$ is $g_{2,i}$,
the data $\{U_i,\pi_i,\gamma_{ij}\}$ form an $N$-cocycle as in
Definition~\ref{def:conformal-foliation}. Therefore $\F_1$ is conformal.
\end{proof}

\begin{lemma}\label{lem:analytic-foliation}
If the conformal product structure $(M,[g],D,T_1,T_2)$ is real analytic, then
$\F_1$ is a transversely real-analytic conformal foliation in the sense of
Definition~\ref{def:transversely-analytic}.
\end{lemma}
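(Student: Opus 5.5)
The plan is to redo the construction in the proof of Lemma~\ref{lem:conformal-foliation}, but in the real-analytic category. The point is that every ingredient used there (adapted product neighborhoods, the projections $\pi_i$, the transverse metrics $g_{2,i}$, the transition maps $\gamma_{ij}$) can be chosen real analytic once the data are.

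First I would produce real-analytic foliated charts. Since $T_1$ and $T_2$ are real-analytic integrable distributions with $TM=T_1\oplus T_2$, the real-analytic Frobenius theorem applies to both. Around each point it gives analytic coordinates $(x,y)$ on $U_i\simeq U_i^1\times U_i^2$ with $T_1=\operatorname{span}\{\partial_x\}$ and $T_2=\operatorname{span}\{\partial_y\}$. One way to get these is to flow out analytic local frames of $T_1$ and $T_2$ from a point. The flows of analytic vector fields are analytic, so the resulting simultaneous product chart is analytic.

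The second projection $\pi_i:U_i\to U_i^2$ is then an analytic submersion whose fibers are the plaques of $\F_1$. On a connected overlap, $\gamma_{ij}$ is determined by $\pi_i=\gamma_{ij}\circ\pi_j$. Restricting to an analytic local section of $\pi_j$, for instance $\{x=\text{const}\}$, gives $\gamma_{ij}=\pi_i\circ s_j$, which is a composition of analytic maps and hence analytic.

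Next I need an analytic transverse metric representing $[g|_{T_2}]$. Take a local analytic representative $g$ of the conformal class, available by Definition~\ref{def:analytic-cps}. The key fact is that the conformal class $[g|_{T_2}]$ is constant along the leaves of $\F_1$. This follows from $D$-parallelism. For $X\in\Gamma(T_1)$ and $Y,Z\in\Gamma(T_2)$ we have $[X,Y]=D_XY-D_YX$. Because $D_YX\in T_1$ and $D_XY\in T_2$, the $T_2$-component of $[X,Y]$ is $D_XY$. Therefore
\[
(\mathcal{L}_X g|_{T_2})(Y,Z)=(D_Xg)(Y,Z)=-2\theta(X)g(Y,Z).
\]
So the conformal class of $g|_{T_2}$ is preserved by leafwise flows. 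This is the content of the local form $g|_{T_2}=e^{2f_{2,i}}\pi_i^*g_{2,i}$.

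To get an analytic $g_{2,i}$, I would define it concretely as $g_{2,i}:=s_i^*(g|_{T_2})$, transported via the analytic identification $d\pi_i:T_2\to TU_i^2$, where $s_i$ is an analytic slice $\{x=x_0\}$. This is an analytic metric on $U_i^2$. By leafwise conformal invariance, $g|_{T_2}$ is conformal to $\pi_i^*g_{2,i}$, so $g_{2,i}$ represents the transverse class. The conformality of $\gamma_{ij}$ then follows exactly as in Lemma~\ref{lem:conformal-foliation}, and $h$ on $N=\coprod_i U_i^2$ is analytic.

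The main obstacle is ensuring the adapted product coordinates of \cite[Proposition~2.3]{MoroianuPilca2025} can be taken analytic, rather than merely smooth. I would bypass that proposition entirely: I would use only the analytic Frobenius charts above, together with the leafwise invariance computation, which requires no coordinate choice beyond analyticity.
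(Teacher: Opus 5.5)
Your proposal is correct and follows the paper's skeleton. Both use analytic charts from the real-analytic Frobenius theorem and an analytic transverse metric obtained by restricting an analytic representative of $[g]$ to a slice; your $g_{2,i}=s_i^*(g|_{T_2})$ is exactly the paper's $h_i$. Both also use analyticity of the $\gamma_{ij}$, and your formula $\gamma_{ij}=\pi_i\circ s_j$ spells out what the paper states in one line.

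The real difference is how the local form $g|_{T_2}=e^{2f_{2,i}}\pi_i^*g_{2,i}$ is obtained. The paper imports it from the smooth local structure theorem \cite[Proposition~2.3]{MoroianuPilca2025}. This is legitimate inside the analytic Frobenius chart, because any two charts adapted to both foliations differ locally by a product map, so the obstacle you anticipated does not really arise. You instead derive the local form directly from $DT_i\subset T_i$, torsion-freeness and $Dg=-2\theta\otimes g$, via $\mathcal L_X\bigl(g(P_2\cdot,P_2\cdot)\bigr)=-2\theta(X)\,g(P_2\cdot,P_2\cdot)$ for $X\in\Gamma(T_1)$, where $P_2\colon TM\to T_2$ is the projection along $T_1$. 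Your route is self-contained and shows that only a chart adapted to $\F_1$ is needed; the paper's is shorter given the citation.

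To integrate your identity, write $g(P_2\cdot,P_2\cdot)=G_{ab}(x,y)\,dy^a dy^b$. Then $\partial_{x^k}G=-2\theta(\partial_{x^k})\,G$, hence $G(x,y)=e^{2\phi(x,y)}G(x_0,y)$ with $\phi=\tfrac12\log\bigl(G_{11}(x,y)/G_{11}(x_0,y)\bigr)$ on connected plaques.

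One side remark is inaccurate. Composing flows of arbitrary analytic frames of $T_1$ and $T_2$ does not give a simultaneous product chart, since flows of $T_1$-fields need not preserve $T_2$. One should instead pair analytic submersions defining $\F_1$ and $\F_2$. This is harmless for your proof, because your argument uses only the $\F_1$-adapted part of the chart together with the identification $d\pi_i\colon T_2\to TU_i^2$.
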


\begin{proof}
Since $T_1$ and $T_2$ are real-analytic complementary involutive distributions, the analytic Frobenius theorem gives real-analytic adapted product neighborhoods
\[
U_i=V_i\times N_i,
\qquad
\pi_i:U_i\longrightarrow N_i.
\]
Choose a real-analytic representative $g$ of $[g]$ on $U_i$. By the local structure theorem for
conformal product structures \cite[Proposition~2.3]{MoroianuPilca2025}, after
shrinking $U_i$ if necessary,
\[
g|_{T_2}=e^{2f_{2,i}}\pi_i^*g_{2,i}
\]
for a Riemannian metric $g_{2,i}$ on $N_i$.

Fix $x_i^0\in V_i$, let $s_i:N_i\rightarrow U_i$ be defined by $s_i(y):=(x_i^0,y)$, and set
\[
h_i:=s_i^*(g|_{T_2})=s_i^*\!\left(e^{2f_{2,i}}\pi_i^*g_{2,i}\right)=e^{2f_{2,i}(x_i^0,\cdot)}(\pi_i\circ s_i)^*g_{2,i}=e^{2f_{2,i}(x_i^0,\cdot)}g_{2,i}.
\]
Thus $h_i$ is real analytic and $[h_i]=[g_{2,i}]$. By Lemma~\ref{lem:conformal-foliation}, on each connected component of an overlap the transverse change of coordinates $\gamma_{ij}$ satisfies
\[
\pi_i=\gamma_{ij}\circ\pi_j,
\qquad
\gamma_{ij}^*[h_i]=[h_j].
\]
The maps $\gamma_{ij}$ are real analytic because the adapted product charts are real analytic. Hence
\[
\gamma_{ij}^*h_i=e^{2\sigma_{ij}}h_j
\]
for a real-analytic function $\sigma_{ij}$; indeed,
$e^{2\sigma_{ij}}=q^{-1}\operatorname{tr}_{h_j}(\gamma_{ij}^*h_i)$. Setting
\[
N:=\coprod_i N_i,
\qquad
h|_{N_i}:=h_i,
\]
we obtain a real-analytic Riemannian metric $h$ on $N$ for which the data
$\{U_i,\pi_i,\gamma_{ij}\}$ satisfy Definition~\ref{def:transversely-analytic}.
Thus $\F_1$ is transversely real-analytic.
\end{proof}

Tarquini asked whether every conformal foliation on a compact manifold is
necessarily Riemannian or transversely M\"obius, and proved the assertion
under a transverse real-analyticity assumption \cite[Theorem~0.0.1]{Tarquini2004}.
We state the analogous statement in the smooth category as the following conjecture.

\begin{conjecture}[Smooth dichotomy for conformal foliations]\label{conj:foliation}
Let $\F$ be a conformal foliation of codimension $q\geq3$ on a compact,
connected smooth manifold. Then $\F$ is either Riemannian or transversely
M\"obius, equivalently, a $(\Conf(S^q),S^q)$-foliation.
\end{conjecture}

If this conjecture holds, the analyticity assumption in Theorem~\ref{thm:analytic-main} can be removed in case of $\max \{\dim T_1,\dim T_2\}\ge 3$. This motivates the following conjecture.

\begin{conjecture}[Smooth globalization]\label{conj:main}
Let $(M,g)$ be compact, connected, and simply connected, with dimension $n\geq5$. If a conformal product structure $TM=T_1\oplus T_2$ on $M$ satisfies $\max\{\dim T_1,\dim T_2\}\ge 3$, then it must be induced by a global product splitting.
\end{conjecture}

\begin{remark}
The full real-analyticity assumption in Theorem~\ref{thm:analytic-main} is
stronger than necessary. The proof only uses that the transverse conformal
structure of $\F_1$ induced by $[g|_{T_2}]$ is real analytic.
\end{remark}

We also recall the following decomposition theorem of Ponge and Reckziegel
\cite[Theorem~1]{PongeReckziegel1993}.

\begin{theorem}[Ponge--Reckziegel]\label{thm:Ponge-Reckziegel}
Let $(M,h)$ be a simply connected, geodesically complete pseudo-Riemannian manifold with two complementary, orthogonal foliations $\mathcal L$ and $\mathcal K$. If $\mathcal L$ is totally geodesic and $\mathcal K$ is totally umbilical, then $(M,h)$ is isometric to a twisted product whose canonical factor foliations correspond to $\mathcal L$ and $\mathcal K$.
\end{theorem}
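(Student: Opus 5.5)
The plan is to follow the proof strategy of the de Rham decomposition theorem, in the form developed by Blumenthal--Hebda for foliations. There are three steps: a local splitting, an extension of that splitting along curves using completeness, and a globalization step using simple connectedness.

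\textbf{Local normal form.} Since $\mathcal L$ and $\mathcal K$ are complementary and integrable, the Frobenius theorem gives product charts $U\simeq U_1\times U_2$ with $T\mathcal L=TU_1$ and $T\mathcal K=TU_2$. Orthogonality means $h=h|_{\mathcal L}+h|_{\mathcal K}$ in such a chart. Next we translate the two geometric hypotheses into conditions on these blocks.
\begin{itemize}
\item Since the complement $\mathcal K$ is integrable, $\mathcal L$ is totally geodesic exactly when $\mathcal L_V(h|_{\mathcal L})=0$ for every $V\in\Gamma(\mathcal K)$. Hence $h|_{\mathcal L}$ is the pull-back of a metric $h_1$ on $U_1$.
\item $\mathcal K$ is totally umbilical exactly when $\mathcal L_X(h|_{\mathcal K})=2\,X(f)\,h|_{\mathcal K}$ for $X\in\Gamma(\mathcal L)$ and a local function $f$. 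Integrating along $U_1$ gives $h|_{\mathcal K}=e^{2f}h_2$, with $h_2$ a metric on $U_2$.
\end{itemize}
So locally $h=h_1+e^{2f(x,y)}h_2$, which is a twisted product.

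\textbf{Extension along curves.} We reproduce the structure of a covering-type argument. Fix $p\in M$, and let $L=L_p$ and $K=K_p$ be the leaves through $p$. A general curve is approximated by piecewise curves tangent to $\mathcal L$ or to $\mathcal K$. For such curves we use the following consequence of the local form: moving along a $\mathcal K$-curve carries the $\mathcal L$-plaques isometrically onto one another, since $h_1$ does not depend on $y$.

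We then define $\Phi:L\times K\to M$ by
\[
\Phi(x,y)=\text{endpoint of the $\mathcal L$-lift, starting at $y$, of a path in $L$ from $p$ to $x$.}
\]
The $\mathcal L$-leaves are totally geodesic, so they are complete in the induced metric, and geodesics of $L$ remain geodesics of $M$. Using completeness of $h$, I will show that these lifts exist for all times, including along unbounded paths in $L$ and through all of $K$. The key point is that the isometric transport of $\mathcal L$-plaques along $\mathcal K$-directions prevents the lift from escaping.

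\textbf{Globalization.} Because lifts exist along all such paths, $\Phi$ is a local diffeomorphism that is compatible with both foliations. With the pulled-back metric, $\Phi$ has the path-lifting property, so it is a covering map. As $M$ is simply connected and $L\times K$ is connected, $\Phi$ is a diffeomorphism. It pulls $h$ back to $h_L+e^{2f}h_K$, where $h_L$ and $h_K$ are the metrics induced on the leaves $L$ and $K$ and $f$ is a function on $L\times K$ (normalized by $f=0$ on $\{p\}\times K$). This is a twisted product whose factor foliations correspond to $\mathcal L$ and $\mathcal K$.

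\textbf{Main obstacle.} I expect the hardest step to be the global existence of the lifts. The $\mathcal K$-leaves are only umbilical, so the conformal factor $e^{2f}$ may degenerate along paths, and completeness has to be routed entirely through the totally geodesic factor. The first thing I would try is the Blumenthal--Hebda ``rectangle'' argument. It shows that the set of parameters along which a lift exists is open and closed. Closedness uses that the $\mathcal L$-parts of the lifts are geodesics of constant length, so they stay in compact sets by completeness.
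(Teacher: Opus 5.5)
The paper does not prove this statement; it quotes it from Ponge--Reckziegel and uses it only for a compact Riemannian manifold. Your local normal form is correct. For the umbilic factor you should add one step: the $1$-form $\omega$ in $\mathcal L_X(h|_{\mathcal K})=\omega(X)\,h|_{\mathcal K}$ is closed along the $\mathcal L$-plaques, which follows from the symmetry of second derivatives of $h_{ab}$ and the nondegeneracy of $h|_{\mathcal K}$. Only then can you write $\omega|_{\mathcal L}=2\,d_{\mathcal L}f$. The rectangle-plus-covering strategy is also the right one.

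The genuine gap is the step you single out yourself. The statement is pseudo-Riemannian, but your closedness argument (``geodesics of constant length stay in compact sets by completeness'') is a Hopf--Rinow argument. In indefinite signature there is no distance function, and curves of small or zero length (null geodesics, for instance) can travel arbitrarily far. Geodesic completeness gives no compactness, so as written the global existence of the lifts is not proved.

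What works is to use geodesic completeness directly. Every path in $L$ is homotopic in $L$ to a broken geodesic, and geodesics of the totally geodesic leaves are geodesics of $M$. So it suffices to transport geodesic $\mathcal L$-segments. Given a $\mathcal K$-curve $\beta$ and $v\in\mathcal L_{\beta(0)}$, transport $v$ along the compact curve $\beta$ to the field $V(s)\in\mathcal L_{\beta(s)}$ that has constant $U_1$-component in each bifoliated chart $U_1\times U_2$. Set $\sigma(t,s)=\exp_{\beta(s)}(tV(s))$, which is defined for all $t$ by completeness. In such a chart the $\mathcal L$-plaques are totally geodesic and all isometric to $(U_1,h_1)$, so locally $\sigma(t,s)=(c(t),y(s))$. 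Hence the set of $T$ with $\partial_s\sigma\in\mathcal K$ on $[0,T]\times[0,1]$ is open: at the supremum, $\partial_t\sigma(T,\cdot)$ still has locally constant $U_1$-component by continuity. It is closed by continuity, and no compactness is needed.

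The second gap is more routine: $\Phi$ is not well defined on $L\times K$. A priori the endpoint of the $\mathcal L$-lift depends on the chosen path in $L$ from $p$ to $x$, through its homotopy class. The lift ``starting at $y$'' also depends on the path in $K$ from $p$ to $y$ along which the initial data are transported, because $\mathcal K$ may have holonomy.

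You have to define $\Phi$ on $\widetilde L\times\widetilde K$ and prove that it is a covering. Evenly covered bifoliated charts come from the rectangle property. You cannot use completeness of the twisted metric $\Phi^*h$, which is not known in advance. Only after that does $\pi_1(M)=1$ give a diffeomorphism, and with it, a posteriori, the simple connectivity of $L$ and $K$.

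With both corrections your outline proves the theorem. In the compact Riemannian setting where the paper applies it, the second correction alone suffices.
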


In the next two sections we consider separately the two cases given by \ref{thm:Tarquini}.

\section{The Riemannian case}

The arguments in this section hold in the smooth category.

Let $\F$ be a conformal foliation of codimension $q$ with transverse conformal class $[h]$ on a simply connected Riemannian manifold $(M,g)$. Assume that $\F$ is Riemannian in the sense of Definition~\ref{def:Riemannian-foliation}, which means that there exists a holonomy-invariant transverse metric. We first establish the following lemma:

\begin{lemma}\label{lem:invariant-conformal-representative}
The conformal class $[h]$ contains a holonomy-invariant Riemannian metric.
\end{lemma}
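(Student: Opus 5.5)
The plan is to manufacture the invariant representative directly from the given holonomy-invariant metric $h_N$, by rescaling $h$ pointwise with a conformal factor built out of $h_N$. The point is that $h_N$ and $h$ need not be conformal to each other, but $h_N$ carries an invariant volume density. Comparing that density with the volume density of $h$ produces exactly the weight needed to cancel the conformal factors of the $\gamma_{ij}$.

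Concretely, on $N$ consider the $h$-self-adjoint, positive endomorphism $A:=h^{-1}h_N$ of $TN$, defined by $h(AX,Y)=h_N(X,Y)$. Set
\[
\lambda:=(\det A)^{1/q}>0,\qquad h':=\lambda\, h .
\]
Then $h'$ is a smooth Riemannian metric in $[h]$. It is real analytic if $h_N$ and $h$ are, though this is not needed here, since this section works in the smooth category. No orientation is required, because $\det A$ is the determinant of an endomorphism rather than a ratio of volume forms.

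Next we check invariance. Take any transverse change of coordinates $\gamma=\gamma_{ij}$. Conformality with respect to $h$ gives $\gamma^*h=e^{2\sigma}h$ for some function $\sigma$, and holonomy invariance gives $\gamma^*h_N=h_N$. The construction $(h,h_N)\mapsto \det(h^{-1}h_N)$ is natural under diffeomorphisms, so
\[
\gamma^*(\det A)=\det\bigl((\gamma^*h)^{-1}\gamma^*h_N\bigr)=\det\bigl(e^{-2\sigma}h^{-1}h_N\bigr)=e^{-2q\sigma}\det A .
\]
Hence $\gamma^*\lambda=e^{-2\sigma}\lambda$, and therefore
\[
\gamma^*h'=(\gamma^*\lambda)(\gamma^*h)=e^{-2\sigma}\lambda\, e^{2\sigma}h=h' .
\]
Since this holds for every $\gamma_{ij}$, it also holds for every element of the holonomy pseudogroup, being closed under compositions and inverses. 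So $h'$ is a holonomy-invariant metric in $[h]$.

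I do not expect a genuine obstacle; the argument is essentially pointwise linear algebra. The only points needing care are bookkeeping ones. First, $h$ and $h_N$ must be metrics on the same transversal $N$ of the same cocycle $\{U_i,f_i,\gamma_{ij}\}$; if the Riemannian structure is given on a different cocycle, one first passes to a common refinement. Second, because $N$ may be disconnected, the construction is carried out on each component separately, and the $\gamma_{ij}$ relate different components. The naturality identity handles this automatically, since it is pointwise.
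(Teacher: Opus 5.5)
Your proposal is correct and is essentially the paper's proof. Since $\det(h^{-1}h_N)=r^2$ with $r=|d\mathrm{vol}_{h_N}|/|d\mathrm{vol}_h|$, your metric $h'=(\det A)^{1/q}h$ is exactly the paper's $\eta=r^{2/q}h$, and your transformation law $\lambda\circ\gamma=e^{-2\sigma}\lambda$ is the paper's identity $(r\circ\gamma)e^{q\sigma}=r$ raised to the power $2/q$. Using the determinant of an endomorphism instead of a ratio of densities is only a cosmetic difference; both avoid any orientation assumption.
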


\begin{proof}
Let $k$ be a holonomy-invariant transverse Riemannian metric, and let $h$ be a representative of the given transverse conformal class. Both $h$ and $k$ are Riemannian metrics on $N$, so one can compare their volume elements by:
\[
r:=\frac{|d\mathrm{vol}_k|}{|d\mathrm{vol}_h|}>0.
\]
For every transverse change of coordinates $\gamma$ and $\sigma$ such that $\gamma^*h=e^{2\sigma}h$. Since $\gamma^*k=k$, we obtain
\[
\gamma^*(d\mathrm{vol}_k)=d\mathrm{vol}_k,
\qquad
\gamma^*(d\mathrm{vol}_h)
=e^{q\sigma}d\mathrm{vol}_h.
\]
Consequently,
\[
r\,d\mathrm{vol}_h=d\mathrm{vol}_k
=\gamma^*(d\mathrm{vol}_k)=\gamma^*(r\,d\mathrm{vol}_h)=(r\circ\gamma)\gamma^*(d\mathrm{vol}_h)=(r\circ\gamma)e^{q\sigma}d\mathrm{vol}_h,
\]
which gives
\[
(r\circ\gamma)e^{q\sigma}=r.
\]

Define $\eta:=r^{2/q}h$. Then $\eta\in[h]$, and
\[
\begin{aligned}
\gamma^*\eta
&=\gamma^*(r^{2/q}h)\\
&=(r\circ\gamma)^{2/q}\gamma^*h\\
&=(re^{-q\sigma})^{2/q}e^{2\sigma}h\\
&=r^{2/q}h=\eta.
\end{aligned}
\]
Thus $\eta$ is a holonomy-invariant Riemannian metric
in $[h]$.
\end{proof}

\begin{lemma}[Bundle-like consequence]\label{lem:bundle-like}
Assume that $\F$ is Riemannian in the sense of
Definition~\ref{def:Riemannian-foliation}, and fix an $N$-cocycle and a metric
$h_N$ satisfying that definition. Let $H\subset TM$ be a smooth subbundle with
$TM=T\F\oplus H$. For $x\in U_i$ and $u,v\in H_x$, set
\[
(h_H)_x(u,v):=(h_N)_{f_i(x)}\bigl(df_i|_x(u),df_i|_x(v)\bigr).
\]
Then $h_H$ is a well-defined Riemannian metric on $H$. Choose any Riemannian
metric $g_{\F}$ on $T\F$ and define a Riemannian metric $\widehat g$ on $M$ by
$T\F\perp_{\widehat g}H$, $\widehat g|_{T\F}=g_{\F}$, and
$\widehat g|_H=h_H$.

For every $i$, let $\bar U,\bar V$ be vector fields on $f_i(U_i)$ and let
$U,V\in\Gamma(H|_{U_i})$ be their unique lifts determined by
$df_i(U)=\bar U\circ f_i$ and $df_i(V)=\bar V\circ f_i$. Then, for every
$X\in\Gamma(T\F|_{U_i})$,
$(\mathcal L_X\widehat g)(U,V)=0$.
\end{lemma}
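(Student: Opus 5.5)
The proof has two parts. First I show that $h_H$ is well defined and positive definite. Then I compute the Lie derivative using the fact that basic lifts are foliated vector fields.

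\emph{Well-definedness.} Let $x\in U_i\cap U_j$. On the connected component of the overlap containing $x$ we have $f_i=\gamma_{ij}\circ f_j$, so
\[
df_i|_x=d\gamma_{ij}|_{f_j(x)}\circ df_j|_x .
\]
By Definition~\ref{def:Riemannian-foliation}, $\gamma_{ij}^*h_N=h_N$. Hence for $u,v\in H_x$,
\[
(h_N)_{f_i(x)}(df_iu,df_iv)=(h_N)_{f_j(x)}(df_ju,df_jv),
\]
so the definitions of $h_H$ coming from $U_i$ and $U_j$ agree. Positivity follows because $\ker df_i|_x=T_x\F$ and $T_xM=T_x\F\oplus H_x$. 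Together these show that $df_i|_x$ restricts to a linear isomorphism $H_x\to T_{f_i(x)}N$, and pulling back $h_N$ by an isomorphism gives an inner product. Smoothness is clear, since $h_H$ is locally the pullback of a smooth metric by a smooth map restricted to a smooth subbundle. The same isomorphism shows that the lifts $U,V$ exist, are unique and are smooth.

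\emph{Key step: the lifts are foliated.} For $X\in\Gamma(T\F|_{U_i})$ we have $df_i(X)=0$, so $X$ is $f_i$-related to the zero field. Since $U$ is $f_i$-related to $\bar U$, the naturality of brackets under related vector fields gives that $[X,U]$ is $f_i$-related to $[0,\bar U]=0$. Therefore $df_i[X,U]=0$, that is, $[X,U]\in\Gamma(T\F)$; the same holds for $V$.

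\emph{Computation.} By definition,
\[
(\mathcal L_X\widehat g)(U,V)=X(\widehat g(U,V))-\widehat g([X,U],V)-\widehat g(U,[X,V]).
\]
Both bracket terms vanish: $[X,U]$ and $[X,V]$ are tangent to $\F$, while $U,V$ lie in $H$, and $T\F\perp_{\widehat g}H$. For the first term,
\[
\widehat g(U,V)=h_H(U,V)=h_N(\bar U,\bar V)\circ f_i ,
\]
which is constant along the plaques (the fibers of $f_i$). Since $X$ is tangent to these fibers, $X(\widehat g(U,V))=d\bigl(h_N(\bar U,\bar V)\bigr)(df_iX)=0$. Hence $(\mathcal L_X\widehat g)(U,V)=0$.

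\emph{Main obstacle.} The only delicate point is the bracket step: one needs $[X,U]$ to be tangent to $\F$, not merely to have a vanishing $H$-component in some weaker sense. This is exactly what $f_i$-relatedness provides, so no step should genuinely resist. The argument is local and uses neither simple connectedness nor anything beyond the isometric cocycle condition.
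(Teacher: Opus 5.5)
Your proof is correct and follows essentially the same route as the paper. Both arguments obtain well-definedness from $f_i=\gamma_{ij}\circ f_j$ and $\gamma_{ij}^*h_N=h_N$, use the isomorphism $df_i|_{H_x}$ to produce the lifts, and use $f_i$-relatedness to force $[X,U],[X,V]\in\Gamma(T\F)$; the three terms of $(\mathcal L_X\widehat g)(U,V)$ then vanish because $\widehat g(U,V)=h_N(\bar U,\bar V)\circ f_i$ and $T\F\perp_{\widehat g}H$.
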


\begin{proof}
For $x\in U_i\cap U_j$, let $W$ be the connected component of
$U_i\cap U_j$ containing $x$, and denote the corresponding transverse change
of coordinates by $\gamma_{ij}$. Since
$f_i|_W=\gamma_{ij}\circ f_j|_W$ and
$\gamma_{ij}^*h_N=h_N$, for $u,v\in H_x$ we have
\[
\begin{aligned}
(h_N)_{f_i(x)}(df_i u,df_i v)
&=(h_N)_{\gamma_{ij}(f_j(x))}
   (d\gamma_{ij}(df_j u),d\gamma_{ij}(df_j v))\\
&=(h_N)_{f_j(x)}(df_j u,df_j v).
\end{aligned}
\]
Thus the formula defining $h_H$ is independent of $i$. Since
$\ker(df_i)=T\F|_{U_i}$ and $TM=T\F\oplus H$, the map
$df_i|_{H_x}:H_x\to T_{f_i(x)}N$ is an isomorphism for every $x\in U_i$;
therefore the lifts $U,V$ in the statement exist uniquely and are smooth on $U_i$.

The field $X$ is $f_i$-related to the zero field on $f_i(U_i)$, while $U$ and
$V$ are $f_i$-related to $\bar U$ and $\bar V$. Hence the naturality of Lie
brackets under a submersion gives
$df_i([X,U])=[0,\bar U]\circ f_i=0$ and
$df_i([X,V])=[0,\bar V]\circ f_i=0$. Thus
$[X,U],[X,V]\in\Gamma(T\F|_{U_i})$. Moreover,
$\widehat g(U,V)=h_N(\bar U,\bar V)\circ f_i$, and therefore
$X(\widehat g(U,V))=0$ because $df_i(X)=0$. Since
$T\F\perp_{\widehat g}H$,
\[
(\mathcal L_X\widehat g)(U,V)
=X(\widehat g(U,V))-\widehat g([X,U],V)-\widehat g(U,[X,V])=0.
\]
\end{proof}

\begin{proposition}\label{prop:Riemannian}
If $\F_1$ is a Riemannian foliation, then $M$ is globally
diffeomorphic to a product whose factor distributions are
$T_1$ and $T_2$.
\end{proposition}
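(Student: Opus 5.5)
By Lemma~\ref{lem:invariant-conformal-representative}, the transverse conformal class $[g|_{T_2}]$ of $\F_1$ from Lemma~\ref{lem:conformal-foliation} contains a holonomy-invariant representative $h_N$. Apply Lemma~\ref{lem:bundle-like} with $\F=\F_1$, $H=T_2$ and this $h_N$. We obtain a metric $\widehat g$ with
\[
T_1\perp_{\widehat g}T_2,\qquad \widehat g|_{T_1}=g_{\F},\qquad \widehat g|_{T_2}=h_H,
\]
where $g_\F$ is still free. Since $h_H$ is pointwise conformal to $g|_{T_2}$, we have $h_H=e^{2\phi}g|_{T_2}$ for a smooth function $\phi$ on $M$. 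We choose $g_\F:=e^{2\phi}g|_{T_1}$, so that $\widehat g=e^{2\phi}g$ lies in the conformal class $[g]$. Since $M$ is compact, $\widehat g$ is complete, and the aim is to apply Theorem~\ref{thm:Ponge-Reckziegel} to $(M,\widehat g)$ with $\mathcal L=\F_2$ and $\mathcal K=\F_1$.

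\textbf{$\F_2$ is totally geodesic for $\widehat g$.} By Lemma~\ref{lem:bundle-like}, $(\mathcal L_X\widehat g)(U,V)=0$ for $X\in\Gamma(T_1)$ and basic lifts $U,V$. By tensoriality in $U,V$ this holds for all sections of $T_2$. Koszul's formula then gives, for $U,V\in\Gamma(T_2)$ and $X\in\Gamma(T_1)$,
\[
2\widehat g(\widehat\nabla_UV,X)=-(\mathcal L_X\widehat g)(U,V)+\widehat g([U,V],X),
\]
where the last term vanishes because $T_2$ is integrable and orthogonal to $T_1$. The symmetric part of $\widehat g(\widehat\nabla_UV,X)$ is therefore zero, and its antisymmetric part is zero by integrability. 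Hence the second fundamental form of $\F_2$ vanishes.

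\textbf{$\F_1$ is totally umbilical for $\widehat g$.} For any Weyl connection $D$ with Lee form $\theta$ relative to $\widehat g$, we have
\[
D_XY=\widehat\nabla_XY+\theta(X)Y+\theta(Y)X-\widehat g(X,Y)\theta^\sharp .
\]
Take $X,Y\in\Gamma(T_1)$. Then $D_XY\in T_1$ because $T_1$ is $D$-parallel, and $\theta(X)Y+\theta(Y)X\in T_1$. Projecting onto $T_2$ gives
\[
\mathrm{II}_{\F_1}(X,Y)=\widehat g(X,Y)\,(\theta^\sharp)^{T_2},
\]
so $\F_1$ is totally umbilical with mean curvature $(\theta^\sharp)^{T_2}$.

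\textbf{Global splitting.} Theorem~\ref{thm:Ponge-Reckziegel} now says that $(M,\widehat g)$ is isometric to a twisted product $M_2\times_\lambda M_1$ whose factor foliations are $\F_2$ and $\F_1$. The underlying diffeomorphism $\Phi:M_1\times M_2\to M$ then satisfies $d\Phi(TM_i)=T_i$, which is exactly the global product splitting claimed in Proposition~\ref{prop:Riemannian}.

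\textbf{Main obstacle.} The key point is to make the totally geodesic side and the umbilical side compatible. This is done by choosing the leafwise metric $g_\F$ so that $\widehat g$ stays in $[g]$: it makes the umbilicity of $\F_1$ follow from the $D$-parallelism of $T_1$, while the holonomy invariance of $h_N$ gives the geodesic property of $\F_2$. One also has to check that Lemma~\ref{lem:bundle-like} applies for this particular choice of $g_\F$. It does, since that lemma allows an arbitrary metric on $T\F$.
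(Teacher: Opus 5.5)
Your proposal is correct and follows the paper's proof. Your choice $g_{\F}=e^{2\phi}g|_{T_1}$ is exactly the paper's rescaling of $g$ so that $g|_{T_2}$ becomes the holonomy-invariant representative; Lemma~\ref{lem:bundle-like} plus Koszul then make $\F_2$ totally geodesic, and Theorem~\ref{thm:Ponge-Reckziegel} finishes. The only difference is that you derive the umbilicity of $\F_1$ directly from $D_XY=\widehat\nabla_XY+\theta(X)Y+\theta(Y)X-\widehat g(X,Y)\theta^\sharp$, where the paper cites Proposition~2.4 of \cite{Jiang2026}; this makes your argument self-contained.
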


\begin{proof}
By Lemma~\ref{lem:invariant-conformal-representative}, the transverse conformal class
induced by $[g|_{T_2}]$ contains a holonomy-invariant Riemannian metric
$\eta$. Thus $\eta=e^{2u}g|_{T_2}$ for some $u\in C^\infty(M)$. Replacing
$g$ by $e^{2u}g$ and retaining the notation $g$, we may assume that
$g|_{T_2}=\eta$. In Lemma~\ref{lem:bundle-like}, take $H=T_2$ and choose
$g_{\F_1}=g|_{T_1}$, so that $\widehat g=g$. Hence, on each foliation chart
$U_i$, for every $X\in\Gamma(T_1|_{U_i})$ and every pair
$U,V\in\Gamma(T_2|_{U_i})$ obtained as the lifts of vector fields on
$f_i(U_i)$, one has
\begin{equation}\label{bundle like}
(\mathcal L_Xg)(U,V)=0.
\end{equation}

As $T_2$ is integrable, $[U,V]\in\Gamma(T_2)$. The Koszul formula and \eqref{bundle like} give
\[
\begin{aligned}
2g(\nabla_UV,X)
&=-Xg(U,V)+g([X,U],V)+g(U,[X,V])+g([U,V],X)\\
&=-(\mathcal L_Xg)(U,V)=0.
\end{aligned}
\]
Hence $\F_2$ is totally geodesic.

By \cite[Proposition~2.4]{Jiang2026}, $\F_1$ is totally umbilical with respect
to $g$. Since $M$ is compact, $(M,g)$ is geodesically complete. Applying
Theorem~\ref{thm:Ponge-Reckziegel} to $\F_2$ and $\F_1$ gives a global twisted
product decomposition.
\end{proof}

\section{The M\"obius case}

We now consider the second case in Theorem \ref{thm:Tarquini}. Let $\mathcal{F}$ be a transversely M\"obius foliation, equivalently, a $(\text{Conf}(S^q),S^q)$-foliation. We first obtain a globalization statement that will be used below.

\begin{lemma}\label{lem:flat-connection}
Let $p:E\to B$ be a proper surjective submersion between connected smooth
manifolds, with connected fibers and simply connected base $B$. Denote by
$V:=\ker(dp)$ and suppose that $TE=V\oplus H$ for a smooth integrable distribution $H$. Then, for every $b_0\in B$, there is a
diffeomorphism $\Phi:p^{-1}(b_0)\times B\xrightarrow{\cong}E$ which sends the tangent bundle of the first factor to $V$ and that of the second
factor to $H$.
\end{lemma}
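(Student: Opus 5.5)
The plan is to treat $H$ as a flat Ehresmann connection on the fibration $p:E\to B$ and to trivialize it by parallel transport, using simple connectedness of $B$ to kill the holonomy.

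\textbf{Step 1: Ehresmann's theorem.} Since $p$ is a proper surjective submersion, it is a locally trivial fiber bundle. Its fiber $F:=p^{-1}(b_0)$ is compact and connected.

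\textbf{Step 2: horizontal lifts exist for all time.} Because $TE=V\oplus H$ and $V=\ker dp$, the map $dp|_{H_e}:H_e\to T_{p(e)}B$ is an isomorphism for every $e$. So every smooth path $c:[0,1]\to B$ and every $e\in p^{-1}(c(0))$ determine a horizontal lift, obtained by solving an ODE in a local trivialization. Properness gives completeness: the lift stays in the compact set $p^{-1}(c([0,1]))$, so it cannot escape in finite time and is defined on all of $[0,1]$. This yields parallel transport maps $P_c:p^{-1}(c(0))\to p^{-1}(c(1))$. These are diffeomorphisms, with inverse given by the reversed path.

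\textbf{Step 3: flatness and trivial holonomy.} Integrability of $H$ means that every $e\in E$ has a neighborhood foliated by integral leaves of $H$. Each such leaf is mapped by $p$ diffeomorphically onto an open set of $B$. Hence parallel transport along paths contained in a small ball $W$ of $B$ depends only on the endpoints: locally the horizontal leaves are sections over $W$.

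To make this uniform, one uses compactness of the fibers. Over a small enough ball $W$, finitely many such local leaf patches cover $p^{-1}(W)$. One can then show $p^{-1}(W)\cong W\times F$ with the horizontal leaves equal to $W\times\{pt\}$. Concretely, the union of the horizontal leaves through the points of $p^{-1}(b)$ is open and closed in $p^{-1}(W)$, and the lift of $W$ through each point is single-valued because $W$ is simply connected.

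A standard subdivision argument then shows that $P_c$ is invariant under homotopies with fixed endpoints: subdivide the homotopy square into small squares, each mapped into such a ball. Since $B$ is simply connected, $P_c$ depends only on the endpoints of $c$. Write $P_{b_0\to b}$ for this map.

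\textbf{Step 4: the diffeomorphism.} Define
\[
\Phi:F\times B\to E,\qquad \Phi(x,b):=P_{b_0\to b}(x).
\]
This is smooth, because locally it agrees with the local trivialization from Step 3. It is bijective, with inverse $e\mapsto\bigl(P_{b_0\to p(e)}^{-1}(e),\,p(e)\bigr)$, which is also smooth. Its differential has the right form:
\begin{itemize}
\item For fixed $x$, the map $b\mapsto\Phi(x,b)$ is a horizontal section, so $d\Phi(TB)=H$.
\item For fixed $b$, the map $x\mapsto\Phi(x,b)$ is the diffeomorphism $P_{b_0\to b}:F\to p^{-1}(b)$, so $d\Phi(TF)=V$.
\end{itemize}

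\textbf{Main obstacle.} The delicate step is Step 3: showing rigorously that integrability, which is purely local, gives local path-independence of transport uniformly over a neighborhood in the base. This is where compactness (via properness) of the fibers is used. Once that is in place, simple connectedness yields global triviality by the usual monodromy argument.
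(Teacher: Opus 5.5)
Your argument is correct, and its skeleton matches the paper's: horizontal lifts are complete by properness, simple connectedness of $B$ makes transport path-independent, $\Phi$ is defined by transport, and smoothness comes from smooth dependence of the lifting ODE on initial data. You differ in how simple connectedness is used. The paper argues globally on leaves. Horizontal lifts starting in the leaf $L_x$ of $H$ stay in $L_x$, so $p|_{L_x}:L_x\to B$ is a surjective local diffeomorphism with path lifting, hence a covering, hence a diffeomorphism. Every $H$-leaf is then a global section, and $\Phi(x,b):=(p|_{L_x})^{-1}(b)$. You instead prove local flatness and run the monodromy argument by subdividing homotopy squares.

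The paper's route is shorter, since one covering-space argument replaces both your Step 3 and the subdivision. It does, however, silently use two facts: a horizontal curve is continuous in the leaf topology of $L_x$, and a local homeomorphism with path lifting is a covering (a criterion that itself hides a homotopy-lifting argument). Your route never leaves product charts and shows directly that the holonomy vanishes.

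Your Step 3 is also simpler than you make it.
\begin{enumerate}
\item Starting from a Frobenius chart and replacing the plaque coordinates by $p$, you get a chart $W_e\times D_e$ around each $e$. In it, $p$ is the first projection and the $H$-plaques are $W_e\times\{y\}$.
\item Compactness of $p^{-1}(b)$ and properness give a ball $W\ni b$ such that $p^{-1}(W)$ is covered by finitely many such charts, with $W\subset W_e$ for each of them.
\item Uniqueness of horizontal lifts then forces the lift of any path in $W$ to stay in a single plaque, which is a section over $W$.
\end{enumerate}
With this, the open-and-closed claim and the appeal to simple connectedness of $W$ are unnecessary. In fact, if you justified single-valuedness over a simply connected $W$ by covering theory, that argument would work verbatim for $W=B$, which is exactly the paper's proof.
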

\begin{proof}
Let $L_x$ denote the leaf of $H$ through $x\in E$. Since
\[
T_yL_x=H_y,
\qquad
dp|_{H_y}:H_y\xrightarrow{\cong}T_{p(y)}B,
\]
the restriction
\[
p_x:=p|_{L_x}:L_x\longrightarrow B
\]
is a local diffeomorphism.

Let $\gamma:[0,1]\to B$ be a smooth path. A horizontal lift
$\widetilde\gamma$ of $\gamma$ satisfies
\begin{equation}\label{3.1}
p(\widetilde\gamma(t))=\gamma(t),
\qquad
\dot{\widetilde\gamma}(t)
=
\bigl(dp|_{H_{\widetilde\gamma(t)}}\bigr)^{-1}
\bigl(\dot\gamma(t)\bigr).
\end{equation}
For every prescribed $e_0\in p^{-1}(\gamma(0))$, Equation \eqref{3.1} has a unique local solution with $\widetilde\gamma(0)=e_0$. Moreover,
\[
\widetilde\gamma([0,a))
\subset
p^{-1}\bigl(\gamma([0,1])\bigr)
\]
for every interval $[0,a)$ on which the solution is defined. Since $p$ is proper, the set on the right is compact; hence the solution cannot cease to exist in finite time. Thus every path in $B$ admits a unique horizontal lift on $[0,1]$ through any prescribed initial point.

Because $H$ is integrable, a horizontal lift starting in $L_x$ remains
in $L_x$. Hence $p_x$ has the path-lifting property. Since $B$ is
connected, lifting paths starting at $p(x)$ shows that $p_x$ is surjective. Therefore $p_x:L_x\longrightarrow B$ is a surjective local diffeomorphism with the path-lifting property, hence a covering map. Since $L_x$ is connected and $B$ is simply connected,
\[
p_x:L_x\xrightarrow{\cong}B
\tag{3.2}
\]
is a diffeomorphism.

Fix $b_0\in B$ and put $P:=p^{-1}(b_0)$. Every $H$-leaf meets every fiber of $p$ in exactly one point. Hence
\[
\Phi:P\times B\longrightarrow E,
\qquad
\Phi(x,b):=p_x^{-1}(b),
\tag{3.3}
\]
is well defined and bijective.

It remains to check smoothness. Fix $b\in B$, choose a smooth path
$\alpha$ from $b_0$ to $b$, and let $U\ni b$ be a coordinate ball.
For $b'\in U$, choose the radial path in $U$ from $b$ to $b'$. Parallel
transport first along $\alpha$ and then along this radial path gives
\[
P\times U\longrightarrow p^{-1}(U).
\]
By smooth dependence of the solutions of \eqref{3.1} on the initial
point and on parameters, this map is smooth, it is
precisely $\Phi|_{P\times U}$. Reversing the paths gives a smooth local
inverse. Hence
\[
\Phi:P\times B\xrightarrow{\cong}E
\]
is a diffeomorphism.

Finally,
\[
p\circ\Phi=\operatorname{pr}_B.
\]
For fixed $b$, $\Phi(\,\cdot\,,b)$ takes values in the fiber
$p^{-1}(b)$, whereas for fixed $x$, $\Phi(x,\cdot)=p_x^{-1}$ takes
values in the leaf $L_x$. Therefore
\[
d\Phi(TP)=V,
\qquad
d\Phi(TB)=H.
\]
\end{proof}

\begin{remark}
The proof is analogous to the corresponding statement of Kobayashi--Nomizu \cite[Chapter~II, \S9, Corollary~9.2]{KobayashiNomizu1963} for principal bundles.
\end{remark}

\begin{proposition}\label{prop:spherical}
Suppose that $\F_1$ is a $(\Conf(S^q),S^q)$-foliation, where $q\geq3$. Then $M$ is globally diffeomorphic to a product whose factor distributions are
$T_1$ and $T_2$.
\end{proposition}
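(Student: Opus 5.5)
The plan is to build a developing map for the transverse M\"obius structure and then to use Lemma~\ref{lem:flat-connection}. Since $M$ is simply connected and $\F_1$ is a $(\Conf(S^q),S^q)$-foliation, the standard analytic-continuation argument for $(G,X)$-structures gives a global submersion
\[
\mathrm{dev}:M\longrightarrow S^q
\]
with $\mathrm{dev}|_{U_i}=g_i\circ f_i$ for suitable $g_i\in\Conf(S^q)$. Here simple connectedness kills the holonomy homomorphism $\pi_1(M)\to\Conf(S^q)$. The fibers of $\mathrm{dev}$ are unions of leaves of $\F_1$, and $\ker d\,\mathrm{dev}=T_1$. Since $M$ is compact, $\mathrm{dev}$ is a proper map. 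It is also open, being a submersion, so its image is open and closed, and $\mathrm{dev}$ is surjective onto $S^q$.

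Next I would show that the fibers of $\mathrm{dev}$ are connected. I would take the Stein factorization $M\to B\to S^q$, where $B$ is the space of connected components of the fibers. The map $B\to S^q$ is then a proper local homeomorphism, hence a finite covering. Because $q\ge 3$, $S^q$ is simply connected, so this covering is trivial on each component of $B$. Connectedness of $M$ gives connectedness of $B$, so $B\cong S^q$ and the fibers are connected.

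Now $p:=\mathrm{dev}:M\to S^q$ is a proper surjective submersion with connected fibers, simply connected base $B=S^q$, vertical bundle $V=T_1$, and integrable horizontal complement $H=T_2$. Lemma~\ref{lem:flat-connection} then gives a diffeomorphism $\Phi:p^{-1}(b_0)\times S^q\to M$ sending the first factor to $T_1$ and the second to $T_2$. This is the required global product splitting.

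I expect the main obstacle to be justifying the developing map carefully. One has to check that the cocycle $\gamma_{ij}$, which consists of restrictions of global M\"obius maps, can be continued along paths in $M$ using the rigidity of M\"obius transformations: an element of $\Conf(S^q)$ is determined by its restriction to any open set. Once continuation along paths is established, it depends only on homotopy classes, and simple connectedness then makes $\mathrm{dev}$ well defined. Connectedness of the fibers via Stein factorization is the second delicate point. A leaf-space argument could replace it, since a compact leaf closure could in principle appear, but the covering argument over $S^q$ should suffice. Notably, the conformal product structure itself enters only through the integrability of $T_2$.
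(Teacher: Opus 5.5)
Your proposal is correct and follows essentially the same route as the paper: a developing map $M\to S^q$ from the simple connectedness of $M$, then surjectivity and properness from compactness and openness, then connectedness of the fibres from the simple connectedness of $S^q$, and finally Lemma~\ref{lem:flat-connection} with $V=T_1$, $H=T_2$. The only difference is cosmetic: you phrase the fibre-connectedness step as triviality of the finite covering $B\to S^q$ of fibre components, which implicitly relies on Ehresmann local triviality to make $B\to S^q$ a local homeomorphism, whereas the paper carries out the same covering argument by hand, transporting a fibre component along paths and homotopies in $S^q$.
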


\begin{proof}
Write the transverse structure as an $S^q$-cocycle $\{U_i,f_i,\gamma_{ij}\}$ as in Definition~\ref{def:spherical-foliation}. On each connected component of an overlap, $\gamma_{ij}$ is the restriction of an element of $\Conf(S^q)$. Fix $x_0\in M$ and one of the local submersions $f_{i_0}$ near $x_0$. For $x\in M$, choose a path from $x_0$ to $x$, cover it by a finite chain of the sets $U_i$, and successively use the transition elements in $\text{Conf}(S^q)$ to continue $f_{i_0}$ along the chain. The cocycle identities show that the resulting value at $x$ is unchanged by refining the chain. If two paths have the same endpoints, they are homotopic relative to their endpoints because $M$ is simply connected. Subdividing a homotopy square so that the image of each small rectangle lies in one of the sets $U_i$ , and using the cocycle identities on adjacent rectangles, shows that the two continuations agree. Hence the continuation is independent of the chosen path and defines a global map $\Gamma: M\to S^q$. By construction, $\Gamma$ is a smooth submersion and $\ker(d\Gamma)=T\widetilde{\F}_1$. 

The image of a submersion is open. On the other hand, $M$ is compact, so $\Gamma(M)$ is compact and therefore closed in $S^q$. Since $S^q$ is connected and the image is nonempty, $\Gamma(M)=S^q$. Thus $\Gamma$ is a proper surjective submersion. By Ehresmann's fibration theorem, it is a smooth locally trivial fiber bundle.

We now show that its fibers are connected. Fix $s_0\in S^q$ and a connected component $P_0$ of $\Gamma^{-1}(s_0)$. Continue $P_0$ through bundle trivializations along a path from $s_0$ to $s\in S^q$; this produces a connected component $P_s\subset\Gamma^{-1}(s)$. The resulting component is independent of the chosen path. Indeed, two such paths are homotopic relative to their endpoints because $S^q$ is simply connected, and after subdividing a homotopy square so that the image of each small rectangle lies in a bundle-trivializing neighborhood, the component transported along opposite sides of each rectangle is the same. Set $M_0:=\bigcup_{s\in S^q}P_s$. In every bundle trivialization, $M_0$ is the product of the base neighborhood with one connected component of the model fiber. Since components of a manifold are both open and closed, $M_0$ is both open and closed in $M$. Because $M$ is connected, $M_0=M$. Intersecting with $\Gamma^{-1}(s_0)$ gives $P_0=\Gamma^{-1}(s_0)$, so this fiber is connected; the same argument applies to every fiber. Consequently each fiber of $\Gamma$ is exactly one leaf of $\mathcal{F}_1$.

Finally, the original conformal product splitting gives $TM=T_1\oplus T_2$, and $T_2$ is integrable. Since
$\ker(d\Gamma)=T_1$, the restriction
$d\Gamma|_{T_2}:T_2\to TS^q$ is a vector-bundle isomorphism; thus
$T_2$ is precisely an integrable Ehresmann complement for
$\Gamma$. Applying Lemma~\ref{lem:flat-connection} to
$p=\Gamma:M\to S^q$, $V=T_1$ and $H=T_2$, we obtain, for any
$s_0\in S^q$, a diffeomorphism
$\Gamma^{-1}(s_0)\times S^q\xrightarrow{\cong}M$ whose two factor
tangent bundles are $T_1$ and $T_2$.
\end{proof}

\begin{proof}[Proof of Theorem~\ref{thm:analytic-main}]
After interchanging the two distributions if necessary, put
$q:=\rk T_2\geq3$. By Lemma~\ref{lem:conformal-foliation}, $\F_1$ is a
conformal foliation of codimension $q$, and by
Lemma~\ref{lem:analytic-foliation} its transverse conformal structure is real
analytic. Since $M$ is compact and connected, Theorem~\ref{thm:Tarquini}
applies. In the Riemannian case,
Proposition~\ref{prop:Riemannian} gives the global product splitting. In the
transversely M\"obius case, $\F_1$ is a
$(\Conf(S^q),S^q)$-foliation, and Proposition~\ref{prop:spherical} gives the
same conclusion.
\end{proof}
This concludes the proof of Theorem \ref{thm:analytic-main} in the case of $\max\{\dim T_1,\dim T_2\}\ge3$.

\section{Remaining low dimension cases}

We next treat the only four-dimensional case not covered by the preceding Tarquini argument, namely $\dim T_1=\dim T_2=2$.

\begin{proposition}\label{prop:dimension-four}
Let $(M,c)$ be a conformal manifold of dimension $n=4$ which is compact and simply connected. If a conformal product structure $TM=T_1\oplus T_2$ is $c$-orthogonal splitting with $\rk T_1=\rk T_2=2$, then its splitting is a global product splitting.
\end{proposition}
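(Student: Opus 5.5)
Since $\rk T_1=\rk T_2=2$ and the splitting is $c$-orthogonal and $D$-parallel, I will build an integrable complex structure and reduce to complex surface theory, as announced in the introduction via Beauville's theorem \cite{Beauville2000}. Because $M$ is simply connected, it is orientable, and so are $T_1$ and $T_2$: $w_1(T_1)+w_1(T_2)=w_1(TM)=0$ and $H^1(M;\mathbb Z_2)=0$. Fix orientations of $T_1$ and $T_2$. Let $J$ be rotation by $+\pi/2$ in each oriented conformal plane $(T_k,[g|_{T_k}])$; this depends only on $c$. Since $D$ preserves $c$, $T_1$, $T_2$ and (by continuity along paths) the orientations, it preserves each rotation $J|_{T_k}$. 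Hence $DJ=0$. Because $D$ is torsion-free, the Nijenhuis tensor of $J$ vanishes, and Newlander--Nirenberg makes $(M,J)$ a compact simply connected complex surface. Moreover $T_1$ and $T_2$ are $J$-invariant involutive real subbundles of complex rank one, so they are two transverse holomorphic foliations by curves. Together they give a splitting $T^{1,0}M=L_1\oplus L_2$ into holomorphic line subbundles.

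The key step is to invoke Beauville's classification \cite{Beauville2000} of compact complex surfaces whose tangent bundle splits as a direct sum of two line bundles, or equivalently that carry two transverse holomorphic foliations. In the simply connected case, I expect this to leave only the surfaces $\mathbb P^1\times\mathbb P^1$, or more generally $C_1\times C_2$ with the two factor foliations. Tori, their quotients, and the various suspension or Inoue-type examples are all excluded by $\pi_1=0$. I would therefore check Beauville's list for simply connected members. If needed, I would also run a Chern class argument: $c_2(M)=c_1(L_1)c_1(L_2)$ and the Baum--Bott/Camacho--Sad type identities for foliations without singularities force $c_1(L_i)^2=0$. Combined with Kodaira's classification, this leaves only rational ruled surfaces, and among these only $\mathbb P^1\times\mathbb P^1$ carries two transverse regular foliations, since Hirzebruch surfaces $F_m$ with $m\ne0$ have a curve of negative self-intersection. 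This identification is the main obstacle, and the place where the precise statement from \cite{Beauville2000} must be matched to our setting.

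Once $M\cong\mathbb P^1\times\mathbb P^1$ with $T_1,T_2$ tangent to the two foliations, the conclusion should follow as in the M\"obius case. Let $\Gamma:M\to S^2$ be the leaf-space projection of $\F_1$. Each leaf of $\F_1$ is a compact curve, namely a fiber of a ruling, so $\Gamma$ is a proper surjective submersion with connected fibers, $\ker d\Gamma=T_1$, and integrable complement $T_2$. Lemma~\ref{lem:flat-connection} then gives the diffeomorphism $\Gamma^{-1}(s_0)\times S^2\cong M$ carrying the factor tangent bundles to $T_1$ and $T_2$.

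A subtle point to verify is that the foliations given by Beauville are exactly the two rulings, rather than just some pair of foliations on an abstract product. Since $T_1$ and $T_2$ determine $L_1$ and $L_2$, and the splittings of $T\mathbb P^1\times T\mathbb P^1$ into line subbundles are unique up to order (because $\mathcal O(2,0)$ and $\mathcal O(0,2)$ admit no nonzero maps to each other), this should hold.
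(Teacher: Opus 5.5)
Your route is the same as the paper's. You orient $T_1,T_2$, build the $D$-parallel $J$, and get integrability; your direct Nijenhuis argument from $DJ=0$ and torsion-freeness is a fine substitute for the citation of Belgun--Moroianu. You then pass to $T^{1,0}M=L_1\oplus L_2$ and apply Beauville's Theorem~C, excluding the Hopf case by $\pi_1(M)=0$.

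\textbf{The gap.} The step that makes Beauville applicable rests on a false implication. $J$-invariance and involutivity of $T_1,T_2$ only say that the leaves are complex curves. They do not say that the foliations, or the line bundles $L_i$, are holomorphic. For instance, on $\{|z|<1\}\times\mathbb C\subset\mathbb C^2$ the complex lines $w=t+\epsilon\bar t z$ ($t\in\mathbb C$, $0<|\epsilon|<1$) form a smooth foliation by complex curves, transverse to the vertical lines $z=\mathrm{const}$. Its slope is $\epsilon\bar t$ with $\bar t=(\bar w-\bar\epsilon\bar z w)/(1-|\epsilon z|^2)$, which is not holomorphic. So $\mathrm{span}(\partial_z+\epsilon\bar t\,\partial_w)$ is not a holomorphic subbundle of $T^{1,0}\mathbb C^2$, even though both distributions are $J$-invariant, involutive and complementary. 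The repair has to use $D$, as the paper does. For $Z\in\Gamma(T^{0,1}M)$ and $s\in\Gamma(L_i)$, torsion-freeness, $DJ=0$ and the $D$-parallelism of $T_i$ give $[Z,s]^{1,0}=(D_Zs-D_sZ)^{1,0}=D_Zs\in\Gamma(L_i)$. Hence the $\bar\partial$-operator of $T^{1,0}M$ preserves each $L_i$.

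\textbf{After the repair.} No hedging or fallback is needed. Beauville's Theorem~C, in the form the paper uses, says the following: if $T_S=L_1\oplus L_2$ holomorphically and $S$ is not a Hopf surface, then the universal cover of $S$ is a product of two simply connected Riemann surfaces, and the splitting lifts to the canonical one. For compact simply connected $M$ this already gives $M\cong\mathbb P^1\times\mathbb P^1$ with $T_1,T_2$ the factor distributions, which is the global product splitting. So your check on splittings of $\mathcal O(2,0)\oplus\mathcal O(0,2)$ and your appeal to Lemma~\ref{lem:flat-connection} are correct but superfluous.

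The Chern-class fallback would not close the argument as sketched. Bott vanishing gives $c_1(L_i)^2=0$, hence only $c_1^2=2c_2$ (signature zero). That does not by itself rule out simply connected surfaces of general type. Your remark that $F_m$ has a negative curve would also need an actual argument that it obstructs two transverse regular foliations.
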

\begin{remark}
No analyticity assumption is needed in this proposition.
\end{remark}
We first establish the following Lemma of existence of Hermitian structure.
\begin{lemma}\label{lem:parallel-Hermitian}
Assume that $T_1$ and $T_2$ are oriented. Then there exists a $D$-parallel almost Hermitian structure $J$ compatible with $c$. More precisely,
\[
        J^2=-\mathrm{Id},\qquad c(JX,JY)=c(X,Y),\qquad DJ=0.
\]
Moreover, $J$ is integrable.
\end{lemma}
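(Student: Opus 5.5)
The plan is to build $J$ directly from the two oriented rank-two distributions. Fix a representative $g\in c$. Since $T_1$ and $T_2$ are oriented rank-two bundles with the conformal structure $[g|_{T_k}]$, each carries a unique rotation by $+\pi/2$: $J_k$ on $T_k$, determined by $J_k^2=-\mathrm{Id}$, $g(J_kX,J_kY)=g(X,Y)$, and $(X,J_kX)$ positively oriented for $X\neq0$. This depends only on the conformal class and the orientation, because rotation by a right angle is conformally invariant in dimension two. Set
\[
J:=J_1\oplus J_2 .
\]
Then $J^2=-\mathrm{Id}$. Since $T_1\perp T_2$, $J$ is orthogonal for every metric in $c$.

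\textbf{Parallelism.} Since $D$ preserves $T_1$ and $T_2$, for any vector field $Y$ the endomorphism $D_YJ$ preserves each $T_k$, and $(D_YJ)|_{T_k}=D_Y(J_k)$. So it suffices to show $D_YJ_k=0$ on $T_k$. The Weyl connection $D$ preserves $c$, so its restriction to $T_k$ is a connection preserving $[g|_{T_k}]$. Hence its parallel transport along any curve is a conformal linear isomorphism between two-dimensional oriented fibres.

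By continuity it is orientation-preserving, being the identity at time zero. An orientation-preserving conformal linear map of oriented two-planes commutes with the rotation by $\pi/2$. Therefore parallel transport intertwines $J_k$, and $DJ_k=0$. Infinitesimally: $D_YJ_k$ anticommutes with $J_k$, by differentiating $J_k^2=-\mathrm{Id}$. It also lies in $\mathbb{R}\,\mathrm{Id}\oplus\mathfrak{so}(2)$, by differentiating $DJ_k$ within the conformal algebra, and that algebra commutes with $J_k$. So $D_YJ_k$ both commutes and anticommutes with the invertible $J_k$, forcing $D_YJ_k=0$.

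\textbf{Integrability.} Use that $D$ is torsion-free. The Nijenhuis tensor can be written with any torsion-free connection:
\[
N_J(X,Y)=(D_{JX}J)Y-(D_{JY}J)X-J(D_XJ)Y+J(D_YJ)X .
\]
This holds because $[X,Y]=D_XY-D_YX$. Since $DJ=0$, every term vanishes, so $N_J=0$. By Newlander--Nirenberg, $J$ is integrable.

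\textbf{Main obstacle.} The only point needing care is the conformal-invariance step: $J_k$ must be well defined independently of the representative in $c$, and $D$ must commute with it even though $D$ is not metric. Both reduce to the two-dimensional fact that the orientation-preserving conformal group $\mathbb{R}_{>0}\times SO(2)\cong\mathbb{C}^*$ commutes with multiplication by $i$. This is where the rank-two hypothesis and the orientability of $T_1,T_2$ are genuinely used.
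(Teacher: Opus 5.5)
Your proof is correct, and the $J$ you construct is the paper's $J$. The paper defines $J_i$ by $c(J_iX,Y)=\omega_i(X,Y)$, where $\omega_i=e_i^\flat\wedge f_i^\flat\otimes\ell^2$ is the weightless area form of the oriented plane $T_i$; this gives exactly rotation by $+\pi/2$.

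The routes differ in how the properties are verified.

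\textbf{Parallelism.} The paper checks by hand that $\omega_i$ is independent of the frame and of the metric in $c$. It then computes $D\omega_i=0$ in a local frame, using $Dg=-2\theta\otimes g$ and $D\ell=\theta\otimes\ell$. You replace all of this with one structural observation: parallel transport of $D$ restricted to $T_k$ takes values in $CO^+(2)\cong\mathbb{C}^*$, which commutes with multiplication by $i$.

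\textbf{Integrability.} The paper only cites Belgun--Moroianu. You derive $N_J=0$ from the torsion-free expression of the Nijenhuis tensor, and your formula is the correct one. You then invoke Newlander--Nirenberg, which is genuinely needed because the four-dimensional proposition assumes no analyticity.

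\textbf{Comparison.} Your argument is shorter and self-contained, and it shows exactly where the rank-two and orientability hypotheses enter. The paper's computation has the advantage of exhibiting the $D$-parallel weighted form $\omega_1+\omega_2$ explicitly, i.e.\ the fundamental form of the Hermitian--Weyl structure.

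One minor point concerns your infinitesimal variant. The clean justification is that $D$ preserves the bundle of $c$-skew endomorphisms of $T_k$. In rank two this bundle is the line spanned by $J_k$, so $D_YJ_k$ is a multiple of $J_k$ and hence commutes with it. The phrase ``differentiating within the conformal algebra'' is vague, but your parallel-transport argument already suffices on its own.
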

\begin{proof}
We construct $J$ directly from the two oriented conformal $2$-plane bundles $T_1$ and $T_2$. Fix $i\in\{1,2\}$. Choose locally a metric $g\in c$, with corresponding scale $\ell=\ell_g$, so that
\[
        c(X,Y)=g(X,Y)\ell^2.
\]
Let $(e_i,f_i)$ be a local positively oriented $g$-orthonormal frame of
$T_i$. Define a section $\omega_i\in \Gamma(\Lambda^2 T_i^*\otimes L^2)$ by $\omega_i:=e_i^\flat\wedge f_i^\flat\otimes \ell^2$, where $\flat$ is taken with respect to $g$.

This definition is independent of the choice of the positively oriented
$g$-orthonormal frame, since another such frame differs by an element of
$SO(2)$ and hence has determinant $1$.

It is also independent of the choice of $g\in c$. Indeed, if $g'=e^{2u}g$, then the corresponding scale is $\ell_{g'}=e^{-u}\ell$. If $(e_i,f_i)$ is $g$-orthonormal, then $(e_i',f_i')=(e^{-u}e_i,e^{-u}f_i)$ is $g'$-orthonormal, and the $g'$-dual one-forms satisfy
\[
        (e_i')^{\flat_{g'}}=e^u e_i^\flat,\qquad
        (f_i')^{\flat_{g'}}=e^u f_i^\flat.
\]
Therefore
\[
        (e_i')^{\flat_{g'}}\wedge (f_i')^{\flat_{g'}}
        \otimes \ell_{g'}^2
        =
        e^{2u} e_i^\flat\wedge f_i^\flat
        \otimes e^{-2u}\ell^2
        =
        e_i^\flat\wedge f_i^\flat\otimes \ell^2.
\]
Hence $\omega_i$ is a well-defined weightless volume form of $T_i$.

Extend $\omega_i$ to all of $TM$ by setting $\omega_i(X,Y):=\omega_i(\pi_iX,\pi_iY)$, where $\pi_i:TM\to T_i$ is the projection along the splitting $TM=T_1\oplus T_2$. Since the splitting is $D$-parallel, the projections $\pi_i$ are $D$-parallel. We now check that $D\omega_i=0$. Let $D\ell=\theta\otimes\ell$ locally. Since $D$ preserves $T_i$, we may
write, for a vector field $X$,
\[
        D_Xe_i=a(X)e_i+b(X)f_i,\qquad
        D_Xf_i=\alpha(X)e_i+\beta(X)f_i.
\]
Because $Dc=0$ and
\[
c(e_i,e_i)=c(f_i,f_i)=\ell^2,\qquad c(e_i,f_i)=0,
\]
we obtain
\[
a=\theta,\qquad \beta=\theta,\qquad \alpha=-b.
\]
Thus
\[
D_Xe_i=\theta(X)e_i+b(X)f_i,\qquad D_Xf_i=-b(X)e_i+\theta(X)f_i.
\]
Since $e_i^\flat=g(e_i,\cdot)$ and $Dg=-2\theta\otimes g$, for every vector field $Y$ we have
\[
\begin{aligned}
(D_Xe_i^\flat)(Y)
&=X\bigl(g(e_i,Y)\bigr)-g(e_i,D_XY)\\
&=(D_Xg)(e_i,Y)+g(D_Xe_i,Y)\\
&=-2\theta(X)g(e_i,Y)
  +g\bigl(\theta(X)e_i+b(X)f_i,Y\bigr)\\
&=-\theta(X)e_i^\flat(Y)+b(X)f_i^\flat(Y).
\end{aligned}
\]
Hence
\[
D_Xe_i^\flat
=
-\theta(X)e_i^\flat+b(X)f_i^\flat.
\]
Similarly,
\[
\begin{aligned}
(D_Xf_i^\flat)(Y)
&=(D_Xg)(f_i,Y)+g(D_Xf_i,Y)\\
&=-2\theta(X)g(f_i,Y)
  +g\bigl(-b(X)e_i+\theta(X)f_i,Y\bigr)\\
&=-b(X)e_i^\flat(Y)-\theta(X)f_i^\flat(Y),
\end{aligned}
\]
and therefore
\[
D_Xf_i^\flat
=
-b(X)e_i^\flat-\theta(X)f_i^\flat.
\]
Therefore
\[
\begin{aligned}
        D_X(e_i^\flat\wedge f_i^\flat)&=[-\theta(X)e_i^\flat+b(X)f_i^\flat]\wedge f_i^\flat+e_i^\flat\wedge[-b(X)e_i^\flat-\theta(X)f_i^\flat]\\
        &=-2\theta(X)e_i^\flat\wedge f_i^\flat.
\end{aligned}
\]
On the other hand, $D_X(\ell^2)=2\theta(X)\ell^2$. Consequently,
\[
D_X(e_i^\flat\wedge f_i^\flat\otimes \ell^2)=0,
\]
so $D\omega_i=0$. Now define $J_i\in\Gamma(\operatorname{End}(TM))$ by:
\[
c(J_iX,Y)=\omega_i(X,Y).
\]
This is well-defined because, for fixed $X$, both sides take values in $L^2$,
and the map
\[
        TM\longrightarrow T^*M\otimes L^2,\qquad
        V\longmapsto c(V,\cdot)
\]
is an isomorphism. In the frame $(e_i,f_i)$ we have
\[
        J_ie_i=f_i,\qquad J_if_i=-e_i,
\]
and $J_i$ vanishes on $T_j$ for $j\neq i$. Hence
\[
        J_i^2=-\pi_i,\qquad J_1J_2=J_2J_1=0.
\]
Set $J:=J_1+J_2$. Then
\[
J^2=J_1^2+J_2^2=-(\pi_1+\pi_2)=-\mathrm{Id},
\]
so $J$ is an almost complex structure. It is compatible with $c$. Indeed,
\[
c(JX,Y)=\omega_1(X,Y)+\omega_2(X,Y)
\]
is skew-symmetric, and hence
\[
c(JX,Y)=-c(X,JY),\qquad
c(JX,JY)=-c(X,J^2Y)=c(X,Y).
\]
Finally, since $Dc=0$ and $D\omega_i=0$, the identity
$c(J_iX,Y)=\omega_i(X,Y)$ implies $DJ_i=0$, and therefore $DJ=0$. The
integrability of $J$ follows from \cite[Lemma~5.6]{BelgunMoroianu2011}. Thus
$(M,c,J,D)$ is a Hermitian Weyl manifold.
\end{proof}

\begin{proof}[Proof of Proposition~\ref{prop:dimension-four}]
Since $M$ is simply connected, both distributions are orientable. Choose orientations on $T_1$ and $T_2$. By Lemma~\ref{lem:parallel-Hermitian}, there is an integrable complex structure $J$ compatible with $c$ and satisfying $DJ=0$.

For $i\in\{1,2\}$, set
\[
L_i:=(T_i\otimes\mathbb C)\cap T^{1,0}M.
\]
Then
\[
T^{1,0}M=L_1\oplus L_2.
\]
This splitting is holomorphic. Indeed, let
$Z\in\Gamma(T^{0,1}M)$ and $s\in\Gamma(L_i)$. Since $DJ=0$, the connection
$D$ preserves the type decomposition, and since $H_i$ is $D$-parallel,
\[
D_Zs\in\Gamma(L_i),
\qquad
D_sZ\in\Gamma(T^{0,1}M).
\]
Since $D$ is torsion-free, we obtain
\[
[Z,s]^{1,0}=(D_Zs-D_sZ)^{1,0}=D_Zs\in\Gamma(L_i).
\]
Thus the Dolbeault operator of $T^{1,0}M$ preserves each $L_i$, so $L_1$ and $L_2$ are holomorphic line subbundles. Beauville's classification \cite[Theorem~C]{Beauville2000} now applies. The Hopf-surface case is impossible because $M$ is simply connected. Therefore the universal cover of $M$ is biholomorphic to a product $\Sigma_1\times\Sigma_2$ of simply connected Riemann surfaces, and the lifted splitting is the canonical splitting $T^{1,0}\Sigma_1\oplus T^{1,0}\Sigma_2$. As $M$ is compact and simply connected, each $\Sigma_i$ is compact and simply connected, hence $\Sigma_i\simeq S^2$. Thus the original splitting is a global product splitting.
\end{proof}

Finally, suppose that $\dim M=3$. One of the two distributions has rank $2$ and, being real-analytic and involutive, defines a real-analytic foliation of codimension one. Since $M$ is simply connected, this contradicts Haefliger's
non-existence theorem \cite[\S~4.3]{Haefliger1962}, which claims that a compact real-analytic manifold whose fundamental group consists entirely of elements of finite order admits no real-analytic foliation of codimension one. This concludes the proof of Theorem \ref{thm:analytic-main}.

\textbf{Declaration on the use of AI:} 
The author used ChatGPT (OpenAI) to assist with literature searches, mathematical discussions and LaTeX editing. The author takes full responsibility for all mathematical statements, proofs, and conclusions in this paper.

\printbibliography

\end{document}